\DeclareMathOperator{\gal}{Gal}
\DeclareMathOperator{\aut}{Aut}
\DeclareMathOperator{\ab}{ab}
\DeclareMathOperator{\GL}{GL}
\DeclareMathOperator{\PGL}{PGL}
\DeclareMathOperator{\en}{End}
\theoremstyle{definition}
\newtheorem{definition}{Definition}[section]
\newtheorem{example}[definition]{Example}
\newtheorem{remark}[definition]{Remark}
\newtheorem*{remark*}{Remark}
\theoremstyle{plain}
\newtheorem{theorem}[definition]{Theorem}
\newtheorem*{theorem*}{Theorem}
\newtheorem{corollary}[definition]{Corollary}
\newtheorem{lemma}[definition]{Lemma}
\newtheorem{conjecture}[definition]{Conjecture}
\newtheorem{theoremA}{Theorem}
\newtheorem{theoremAA}{Theorem}
\newcommand{\N}{{ \mathbb N }}
\newcommand{\Q}{{ \mathbb Q }}
\newcommand{\Z}{{ \mathbb Z }}
\newcommand{\R}{{ \mathbb R }}
\newcommand{\PP}{{ \mathbb P }}
\def \C {{\mathbb C}}
\renewcommand{\ab}{{ \text{ab} }}
\newcommand{\p}{{ \mathfrak p }}
\renewcommand{\P}{{ \mathfrak P }}
\renewcommand{\O}{{ \mathcal O }}
\def\ov\K{\overline{K}}
\author[A. Ferraguti]{Andrea Ferraguti}
\address{Scuola Normale Superiore, Piazza dei Cavalieri 7, 56126 Pisa}
\address{DICATAM, Università degli Studi di Brescia, via Branze 43, I-25123 Brescia}
\email{and.ferraguti@gmail.com}
\author[A. Ostafe]{Alina Ostafe}
\address{School of Mathematics and Statistics, University of New South Wales, Sydney, NSW 2052, Australia}
\email{alina.ostafe@unsw.edu.au}
\author[U. Zannier]{Umberto Zannier}
\address{Scuola Normale Superiore, Piazza dei Cavalieri 7, 56126 Pisa}
\email{umberto.zannier@sns.it}
\title[Cyclotomic and abelian points in backward orbits]{Cyclotomic and abelian points in backward orbits of rational functions}
\keywords{Arithmetic dynamics, rational functions, abelian extensions.}
\subjclass[2020]{Primary  37P05, 37P15, 11R18, 11R20.}
\begin{document}

\begin{abstract}
We prove several results on backward orbits of rational functions over number fields.  First, we show that if $K$ is a number field, $\phi\in K(x)$ and $\alpha\in K$ then the extension of $K$ generated by the abelian points (i.e.\ points that generate an abelian extension of $K$) in the backward orbit of $\alpha$ is ramified only at finitely many primes. This has the immediate strong consequence that if all points in the backward orbit of $\alpha$ are abelian then $\phi$ is post-critically finite. We use this result to prove two facts: on the one hand,  if $\phi\in \Q(x)$ is a quadratic rational function not conjugate over $\Q^{\ab}$ to a power or a Chebyshev map and all preimages of $\alpha$ are abelian, we show that $\phi$ is $\Q$-conjugate to one of two specific quadratic functions, in the spirit of a recent conjecture of Andrews and Petsche. On the other hand we provide conditions on a quadratic rational function in $K(x)$ for the backward orbit of a point $\alpha$ to only contain finitely many cyclotomic preimages, extending previous results of the second author.  Finally, we give necessary and sufficient conditions for a triple $(\phi,K,\alpha)$, where $\phi$ is a $K$-Latt\`es map over a number field $K$ and $\alpha\in K$, for the whole backward orbit of $\alpha$ to only contain abelian points.
\end{abstract}

\maketitle

\section{Introduction}

This paper is motivated by the following question. Suppose that $K$ is a number field with algebraic closure $\overline{K}$, $\phi\in K(x)$ is a rational function of degree  greater than $1$ and $\alpha\in K$. Let $\phi^{-\infty}(\alpha)$ be the backward orbit of $\alpha$, namely the set of all $\gamma\in \overline{K}$ such that $\phi^n(\gamma)=\alpha$ for some $n\geq 1$, where $\phi^n$ is the $n$-th iterate of $\phi$. How large is the set of all $\gamma \in \phi^{-\infty}(\alpha)$ that belong to the \emph{cyclotomic closure} $K^c$ of $K$? And how large is the set of all $\gamma \in \phi^{-\infty}(\alpha)$ that belong to the \emph{abelian closure} $K^{\ab}$ of $K$?

We say that two rational functions $\phi,\psi$ are \emph{$L$-conjugate} if there is an extension $L/K$ and some invertible $f=\frac{ax+b}{cx+d}\in L(x)$ such that $f\circ \phi\circ f^{-1}=\psi$.

In their paper~\cite{DZ}, Dvornicich and the third author introduced a technique, based on ideas rooting back to Mordell, Lang, and Loxton, that uses the Torsion Coset Theorem~\cite[Theorem~4.2.2]{BG}, to deal with a similar problem. In particular, they proved that if $\phi$ is a polynomial of degree $d\ge 2$, then $\phi$ has only finitely many preperiodic points in $K^c$, unless $\phi$ is $\overline{K}$-conjugate to a polynomial of a very special kind, namely $x^d$ or $\pm T_d(x)$, where $T_d$ is the Chebyshev polynomial of degree $d$.  This suggests the idea that the existence of infinitely many cyclotomic points in a backward orbit should be an extremely rare phenomenon. The second author, in~\cite{ostafe}, used the technique of Dvornicich and Zannier~\cite[Theorem~2]{DZ} to prove a similar finiteness result for the presence of cyclotomic points in backward orbits of roots of unity. In particular, as a consequence of~\cite[Theorem~1.2]{ostafe}, if $\phi=f/g\in K(x)$, where $f,g\in K[x]$ are such that $\deg f>\deg g +1$, and $\alpha$ is a root of unity, then $\phi^{-\infty}(\alpha)$ contains only finitely many cyclotomic points  unless $\phi$ is $\overline{K}$-conjugate to $x^d$ or $\pm T_d(x)$. The condition on the degrees means, in dynamical terms, that $\infty$ is a superattracting fixed point for $\phi$. Later, in~\cite[Theorem~1.5]{Chen} Chen generalized this result to backward orbits of regions of bounded house, but in its essence the technique does not change and the condition on the degrees is still needed.

On the other hand, the arithmetic of backward orbits of rational functions has attracted a lot of attention in recent years. Motivated by the seminal work of Odoni~\cite{odoni}, Boston and Jones~\cite{boston2} started studying the Galois structure of the extension of $K$ generated by the backward orbit of a point, a topic that intriguingly recalls the theory of Tate modules of elliptic curves. Several authors studied the problem in detail (see, for example,~\cite{anderson,benedetto3,bridy2,ferra4,hindes,jones_divisors,juul,lukina}, or the survey~\cite{benedetto4}), but only in 2020 Andrews and Petsche~\cite{andrews} formulated the conjecture below, aiming to characterize all pairs $(\phi,\alpha)$, where $\phi\in K[x]$ and $\alpha\in K$, such that $\phi^{-\infty}(\alpha)$ generates an abelian extension of $K$. 

We say that a pair $(\phi,\alpha)$ is $L$-conjugate to a pair $(\psi,\beta)$ if there exists an extension $L/K$ and an invertible $f=\frac{ax+b}{cx+d}\in L(x)$ such that $f\circ\phi\circ f^{-1}=\psi$ and $f(\alpha)=\beta$. Moreover, $\alpha$ is \emph{exceptional} for $\phi$ if $\phi^{-\infty}(\alpha)$ is finite.

\begin{conjecture}[{{\cite[Conjecture 1]{andrews}}}]\label{conjecture}
Let $K$ be a number field, let $\phi\in K[x]$ have degree $d\ge 2$ and let $\alpha\in K$. Suppose that $\alpha$ is not exceptional for $\phi$. Then $\phi^{-\infty}(\alpha)\subseteq K^{\ab}$ if and only if the pair $(\phi,\alpha)$ is $K^{\ab}$-conjugate to either $(x^d,\zeta)$ or $(\pm T_d,\zeta+\zeta^{-1})$ for some root of unity $\zeta$.\footnote{The formulation in~\cite{andrews} overlooks the case of polynomials of the form $-T_d(x)$, that needs to be included as well, since when $d$ is odd $T_d(x)$ and $-T_d(x)$ are not $\overline{K}$-conjugate.}
\end{conjecture}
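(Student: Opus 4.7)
The plan is to combine the paper's first main theorem (the ``strong consequence'' that an abelian backward orbit forces post-critical finiteness) with a rigidity statement on polynomials of abelian iterated monodromy, and then descend the resulting $\overline{K}$-conjugation to $K^{\ab}$. The ``only if'' direction is easy: for $\phi = x^d$ the preimages of any root of unity $\zeta$ are again roots of unity; for $\phi = \pm T_d$ the identity $T_d(z+z^{-1}) = z^d + z^{-d}$ shows that preimages of $\zeta+\zeta^{-1}$ have the form $\zeta'+(\zeta')^{-1}$ with $\zeta'$ a root of unity; in both cases $K^{\ab}$-conjugation preserves the abelianness of the backward orbit.

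For the ``if'' direction, first invoke the paper's theorem: since $K(\phi^{-\infty}(\alpha))/K$ is abelian, in particular it is ramified at only finitely many primes, and hence $\phi$ is post-critically finite. The hypothesis actually gives much more, namely that for every $n \ge 1$ the Galois group of $K(\phi^{-n}(\alpha))/K$ is abelian. Via the arboreal representation this group embeds into the iterated wreath product $\underbrace{S_d \wr \cdots \wr S_d}_{n \text{ times}}$, which is highly nonabelian for $d \ge 2$; the embedding must therefore factor through a very small abelian quotient at every level, an extremely strong constraint both on $\phi$ and on the basepoint $\alpha$.

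Next I would extract a \emph{Ritt-type classification}: among degree-$d$ polynomials over a number field, only $x^d$ and $\pm T_d$ (up to $\overline{K}$-conjugation) can admit abelian arboreal monodromy at a non-exceptional basepoint. For $d=2$ this is accessible because the post-critically finite quadratic polynomials form a discrete, listable family, and each conjugacy class can be handled directly; this is the strategy behind the paper's own quadratic result. For $d \ge 3$ a natural approach is to iterate the finite-ramification conclusion: at level $n$ the abelian extension $K(\phi^{-n}(\alpha))/K$ sits inside a ray-class field of bounded conductor exponent away from a fixed set $S$ of primes, whose degree over $K$ grows only polynomially in $n$, while $[K(\phi^{-n}(\alpha)):K]$ grows like $d^n$ unless the tree of preimages degenerates in the very specific way realized by $x^d$ and $\pm T_d$.

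Finally, once $\phi$ is $\overline{K}$-conjugate to $x^d$ or $\pm T_d$, descent of both the conjugating linear fractional map and the basepoint $\alpha$ to $K^{\ab}$ is handled by matching $\alpha$ with a root of unity (respectively with $\zeta+\zeta^{-1}$); a Kummer-theoretic argument rules out any other possibility while keeping backward orbits abelian. The genuine obstacle is the classification step in degree $\ge 3$: no Ritt-style theorem classifying polynomials with abelian arboreal monodromy is currently available in the literature, and producing one appears essentially equivalent to the conjecture itself. This is precisely why the paper pivots to quadratic maps and proves only an Andrews--Petsche-type statement in that setting.
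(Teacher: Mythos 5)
The statement you are trying to prove is Conjecture~\ref{conjecture}: the paper does not prove it in general and does not claim to, so there is no ``paper proof'' to match. Your proposal is likewise not a proof --- you say so yourself at the end, conceding that the classification step (only $x^{d}$ and $\pm T_{d}$ admit abelian arboreal monodromy at a non-exceptional basepoint) is missing and ``appears essentially equivalent to the conjecture itself.'' That is the genuine gap, and no amount of packaging around it (the easy sufficiency direction, the PCF reduction via Theorem~\ref{thm1}, the descent of the conjugacy to $K^{\ab}$) closes it. Your heuristic for $d\ge 3$ --- comparing polynomial growth of ray-class fields of bounded conductor with the $d^{n}$ growth of $[K(\phi^{-n}(\alpha)):K]$ --- also does not work as stated: abelian extensions of $K$ unramified outside a fixed finite set $S$ do \emph{not} have degree growing polynomially in any parameter you control; the $S$-ray class field tower already contains $\Z_p$-extensions for each $p$ below $S$, so there is no contradiction between finite ramification and exponential degree growth. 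The paper itself only uses finite ramification to deduce PCF (via the reference to Bridy et al.\ in Corollary~\ref{abelian_implies_pcf}), not to bound degrees.

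You also misplace where the real difficulty lies. The case you flag as hopeless ($d\ge 3$) is in fact handled by the paper over $K=\Q$: Corollary~\ref{ap_conj_polys} proves the conjecture for every polynomial of every degree $d\ge 2$ over $\Q$, because $\infty$ is a periodic (superattracting) critical point of any polynomial, so Lemma~\ref{superattracting_point} applies --- this is the Dvornicich--Zannier/Ostafe torsion-coset machinery, not a Ritt-type or PCF-listing argument --- and over $\Q$ one has $\Q^{\ab}=\Q^{c}$ by Kronecker--Weber, so abelian points are cyclotomic. The discrete-listing strategy you describe is what the paper uses for quadratic \emph{rational functions} (Theorem~\ref{ap_conjecture}), not for polynomials. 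The genuinely open part of the conjecture is the passage from $\Q$ to a general number field $K$, where $K^{\ab}$ is strictly larger than $K^{c}$ and the cyclotomic-point techniques no longer capture all abelian points; your proposal does not engage with that obstruction at all.
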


Conjecture~\ref{conjecture} has been proven by Andrews and Petsche~\cite[Theorem 1]{andrews} for $K=\Q$, $\deg \phi=2$ and $\alpha$ such that $(\phi,\alpha)$ is \emph{stable}, i.e.\ such that $\phi^n(x)-\alpha$ is irreducible for every $n$, by Pagano and the first author~\cite{ferra1} for $K=\Q$, $\deg \phi=2$ and all $\alpha$, and of course it follows from the second author's result~\cite{ostafe} for $K=\Q$ and any $\phi \in \Q[x]$ and $\alpha\in \Q$ (see Corollary~\ref{ap_conj_polys}). Interestingly enough, the three papers use three completely different sets of techniques (see the first author's survey \cite{ferra5} for an explanation of the strategies). Nevertheless, both the arguments in~\cite{andrews} and~\cite{ferra1} show, as a first step, that if $\phi$ is a quadratic polynomial and $\phi^{-\infty}(\alpha)$ only contains abelian points, then $\phi$ is \emph{post-critically finite} (PCF), i.e.\ the critical points of $\phi$ all have finite orbits. This is already a rather strong restriction on $\phi$, since as shown in~\cite{benedetto2} post-critically finite maps that are not flexible Latt\`es maps are a set of bounded height in the moduli space of rational functions of a given degree.

Our first result, Theorem~\ref{finite_ramification}, widely generalizes the aforementioned implication, in the following way.

\begin{theoremAA} 
Let $K$ be a number field, let $\phi\in K(x)$ be a rational function of degree $\ge 2$ and let $\alpha\in K$. Let $K_\infty^{\ab}(\phi,\alpha)$ be the extension of $K$ generated by all $\gamma\in \phi^{-\infty}(\alpha)$ that belong to an abelian extension of $K$. Then $K_\infty^{\ab}(\phi,\alpha)/K$ ramifies only at finitely many primes. Consequently, if $\phi^{-\infty}(\alpha)\subseteq K^{\ab}$, then $\phi$ is PCF.
 \end{theoremAA}

The last part of the statement is an immediate consequence of~\cite[Theorem~5]{bridy}, which shows that if the set of primes of $K$ that ramify in the extension of $K$ generated by the backward orbit of $\alpha$,  $K_\infty(\phi,\alpha)$, where $\alpha$ is not exceptional, is a finite set then $\phi$ is PCF.

It is actually reasonable to expect that $K_\infty^{\ab}(\phi,\alpha)/K$ is a finite extension except for very special cases. In fact, in Corollary~\ref{abelian_implies_pcf} we show that if~\cite[Conjecture~1.2]{jones_ev} holds true, then $[K_\infty^{\ab}(\phi,\alpha):K]=\infty$ implies that $\phi$ is PCF. Theorems~\ref{ap_conjecture} and~\ref{suff_conditions} below prove that for several large classes of $\phi\in \Q(x)$ it is indeed the case that $[\Q_\infty^{\ab}(\phi,\alpha):\Q]<\infty$.

Theorem~\ref{finite_ramification} is strong evidence towards Conjecture~\ref{conjecture}, since power and Chebyshev maps are PCF, and moreover over any fixed number field $K$ there exist only finitely many $\overline{K}$-conjugacy classes of PCF polynomials.

Of course one can wonder what shape should Andrews-Petsche's conjecture have for rational functions. Clearly the statement of the conjecture cannot hold true verbatim for all rational functions (even including the obvious extra case where $\phi$ is $\overline{K}$-conjugate to $x^{-d}$), because of the existence of Latt\`es maps (see Section~\ref{lattes}). However, when $\phi \in \Q(x)$ is a quadratic map the theory of complex multiplication shows that $\phi$ cannot be a $\Q$-Latt\`es map (cf.\ Definition \ref{lattes_def}). In Theorem~\ref{ap_conjecture}, we use Theorem~\ref{finite_ramification} to prove the result below, in the spirit of Conjecture~\ref{conjecture}. In order to state Theorem~\ref{ap_conjecture}, we introduce a preliminary definition. Recall that for $K$ a number field, $K^c$ denotes the cyclotomic closure of $K$.

\begin{definition}
A degree $d$ rational function $\phi\in K(x)$ is called \emph{special} if it is $K^c$-conjugate to $x^{\pm d}$ or $\pm T_d(x)$, and \emph{non-special} otherwise.
\end{definition}

We refer to Lemma \ref{conjugacy_field} to explain why conjugacy is required to happen over $K^c$ rather than over $\overline{K}$.

\begin{theoremAA}
Let $\phi\in \Q(x)$ be a non-special rational map of degree $d\geq 2$, let $\alpha\in \Q$ be a non-exceptional point for $\phi$ and suppose that $G_\infty(\phi,\alpha)$ is abelian. Then the following hold.
\begin{enumerate}
\item $\phi$ is PCF and all of its critical points are strictly preperiodic.
\item If $d=2$, then $\phi$ is $\Q$-conjugate to either $\frac{2x^2+4x+4}{-x^2+4}$ or $\frac{1}{2(x-1)^2}$.
\end{enumerate}\end{theoremAA}

Theorem~\ref{ap_conjecture} is proven via an extension of the results of~\cite{ostafe} (see Lemma~\ref{superattracting_point}), combined with the classification given in~\cite{lukas} of PCF quadratic maps over the rationals.

Part $(1)$ of Theorem \ref{ap_conjecture} follows from Theorem \ref{finite_ramification} together with Lemma~\ref{superattracting_point} below, that states that if $\phi\in K(x)$ has a periodic critical point (or, equivalently, if some iterate of $\phi$ has a superattracting fixed point) then $\phi^{-\infty}(\alpha)$ contains only finitely many cyclotomic points unless $\phi$ is special. Note that $\phi^{-\infty}(\alpha)$ containing only finitely many cyclotomic points is equivalent to $[K^c_\infty(\phi,\alpha):K]<\infty$, where $K^c_\infty(\phi,\alpha)$ is defined as in Theorem~\ref{suff_conditions}, see Remark~\ref{abelian_part_properties}. 

Of course one can wonder what happens when the condition on superattractivity, which is crucial in the proof of Lemma~\ref{superattracting_point}, is removed. In Theorem~\ref{suff_conditions} we focus on quadratic rational functions over an arbitrary number field and we prove the same conclusion of Lemma~\ref{superattracting_point} for a different class of rational functions, as follows, using a completely new argument. Notice that these functions need not having a periodic critical point, so they can fall short of the hypotheses of Lemma~\ref{superattracting_point}. We denote by $h(\cdot)$ the absolute logarithmic Weil height on $\overline{\Q}$.

\begin{theoremAA}
Let $K$ be a number field, let $a,c\in K$ with $a\neq 0$ and let $\phi\coloneqq ax+a/x+c$. Let $\alpha\in K$, and suppose that there exists a prime $\p$ of $K$ lying above $2$ such that one of the following conditions hold:
\begin{enumerate}
\item $v_\p(a)=0$ and $v_\p(c)>0$;
\item $v_\p(a)=0$, $h(a)>\log 4$ and $c\equiv 1\bmod \p$.
\end{enumerate}
Let $K_\infty^c(\phi,\alpha)$ be the extension of $K$ generated by all $\gamma\in \phi^{-\infty}(\alpha)$ that belong to a cyclotomic extension of $K$. Then $[K^c_\infty(\phi,\alpha):K]<\infty$.
\end{theoremAA}

The proof makes an essential use of Theorem~\ref{finite_ramification}; point (2) needs the results of~\cite{benedetto2} and~\cite{jones_ev}. The hypotheses of Theorem~\ref{suff_conditions} are very different from those of~\cite{Chen,DZ,ostafe}: maps that satisfy $(1)$ or $(2)$ need not have a periodic critical point (in fact, the forward orbits of the critical points can be both infinite). From the dynamical point of view, the existence of a prime $\p$ with $v_\p(a)=0$ implies that the map has good reduction at $\p$, and therefore its $\p$-adic Julia set is necessarily empty (see \cite[Proposition 5.11]{benedetto1}). Notice that writing $\phi$ as $ax+a/x+c$ is not restrictive with regard to the conclusion we want to obtain, since every quadratic rational function is conjugate to one in that shape over a finite extension of $K$.

%
%

Finally, in Section~\ref{lattes} we study abelian points in backward orbits of Latt\`es maps. We will use a slightly stronger notion of Latt\`es map, that keeps track of the fields of definition of the objects involved.

\begin{definition}\label{lattes_def}
Let $K$ be a number field. A \emph{$K$-Latt\`es map} is a rational map $\phi\in K(x)$ that fits in a commutative diagram of the following form:
$$
\xymatrix{
    E \ar[r]^{\psi} \ar[d]_{\pi} & E \ar[d]^{\pi} \\
    \mathbb P^1 \ar[r]^{\phi}       & \mathbb P^1},
$$
where $E$ is an elliptic curve defined over $K$ and $\pi,\psi$ are non-constant morphisms defined over $K$.
\end{definition}

Notice that, as an element of $K(x)$, a $K$-Latt\`es map can be defined over a smaller field than $K$, cf.\ Example \ref{lattes_example}.

If $\phi$ is a $K$-Latt\`es map and $\alpha\in K$, complex multiplication theory easily gives sufficient conditions for $\phi^{-\infty}(\alpha)$ to generate an abelian extension of $\alpha$; we prove in Theorem~\ref{lattes_abelian} that these are also necessary, as follows.

We recall that $K^{\ab}$ is the abelian closure of $K$.

\begin{theoremAA}
Let $K$ be a number field, let $\phi\in K(x)$ be a $K$-Latt\`es map of degree $d\geq 2$ associated to an elliptic curve $E$ and let $\alpha\in K$. Then $\phi^{-\infty}(\alpha)\subseteq K^{\ab}$ if and only if the following three conditions hold:
\begin{enumerate}
\item $\alpha$ is preperiodic for $\phi$;
\item $E$ has CM by an order in a quadratic number field $F$;
\item the field $F=\text{End}(E)\otimes \Q$ is contained in $K$.
\end{enumerate}
\end{theoremAA}

The proof makes use of Silverman's result~\cite{silverman3} on the canonical height of abelian points and of Lozano-Robledo's classification of images of Galois representations of CM elliptic curves~\cite{lozano}.

\subsection*{Notation}
Whenever $K$ is a number field, we implicitly fix an algebraic closure $\overline{K}$. The cyclotomic closure of $K$ inside $\overline{K}$ is denoted by $K^c$, while $K^{\ab}$ denotes the abelian closure. We denote by $G_K$ the absolute Galois group of $K$.

If $\phi,\psi\in K(x)$ and $L/K$ is an extension of fields, we say that $\phi,\psi$ are \emph{$L$-conjugate} if there exists $f=\frac{ax+b}{cx+d}\in L(x)$ with $ad-bc\neq 0$, such that $\psi= f^{-1}\circ\phi\circ f$. If moreover $\alpha,\beta\in K$ we say that the pairs $(\phi,\alpha)$ and $(\psi,\beta)$ are $L$-conjugate if there exists an $f$ as above such that $\psi= f^{-1}\circ\phi\circ f$ and $f^{-1}(\alpha)=\beta$.

If $\phi\in K(x)$ and $n\ge 0$, we denote by $\phi^n$ the $n$-th iterate of $\phi$, setting by convention $\phi^0=x$. If $\alpha\in K$, we let

$$\phi^{-n}(\alpha)\coloneqq\{\gamma\in\overline{K}\colon \phi^n(\gamma)=\alpha\} \mbox{ and } \phi^{-\infty}(\alpha)\coloneqq\bigcup_{n\geq 0}\phi^{-n}(\alpha).$$

The latter is called the \emph{backward orbit} of $\alpha$. If $\phi^n=p_n/q_n$ for some coprime $p_n,q_n\in K[x]$ and $\alpha\in K$ then we let $K_n(\phi,\alpha)$ be the splitting field of $p_n-\alpha q_n$ and $G_n(\phi,\alpha)$ its Galois group. Finally, $K_\infty(\phi,\alpha)\coloneqq \varinjlim_n K_n(\phi,\alpha)$ and $G_\infty(\phi,\alpha)\coloneqq \varprojlim_n G_n(\phi,\alpha)=\gal(K_\infty(\phi,\alpha)/K)$. Equivalently, $K_\infty(\phi,\alpha)$ is the extension of $K$ generated by the backward orbit of $\alpha$.

For a rational function $\phi\in K(x)$, the \emph{critical points} of $\phi$ are the ramification points of $\phi$, regarded as a self-morphism of the projective line. Also, we say that an element $\alpha\in\ov\K$ is \emph{exceptional} for $\phi$ if its backward orbit is finite.

We denote by $h(\cdot)$ the absolute logarithmic Weil height.

\section{Abelian points and finite ramification}
Let $K$ be a number field, $\phi\in K(x)$ and $\alpha\in K$. In this section we show that points in the backward orbit of $\alpha$ that are abelian over $K$ generate altogether an extension of $K$ that is only ramified at finitely many primes. It is reasonable to expect that this extension must even be finite, unless $\phi$ is of a very special form. We will turn our attention to this issue in Section~\ref{cyclo_quadratic}.

\begin{definition}
We say that $\beta\in \phi^{-\infty}(\alpha)$ is an \emph{abelian point} if the extension $K(\beta)/K$ is abelian. We call \emph{abelian part} of $K_\infty(\phi,\alpha)$, and we denote it by $K_\infty^{\ab}(\phi,\alpha)$, the extension of $K$ generated by all abelian points of $\phi^{-\infty}(\alpha)$. 
\end{definition}

\begin{remark}\label{abelian_part_properties}
Notice that $[K_\infty^{\ab}(\phi,\alpha):K]$ is infinite if and only if $\phi^{-\infty}(\alpha)$ contains infinitely many distinct abelian points. In fact, one direction is obvious, while the other one follows from the Northcott property for number fields, together with the fact that points in $\phi^{-\infty}(\alpha)$ have bounded height. Same remark applies if we replace abelian points with cyclotomic points.

The abelian part defined above is \emph{not} the largest abelian extension of $K$ contained in $K_\infty(\phi,\alpha)$. For example if $K=\Q$, $\phi=x^2$ and $\alpha=2$, the abelian part of $\Q_\infty(\phi,\alpha)$ is $\Q(\sqrt{2})$, but $\Q_\infty(\phi,\alpha)$ contains $\Q(\zeta_{2^{\infty}})$.
\end{remark}

\begin{theoremA}\label{finite_ramification}
Let $K$ be a number field, let $\phi\in K(x)$ be a rational function of degree $\ge 2$ and let $\alpha\in K$. Let $K_\infty^{\ab}(\phi,\alpha)$ be the extension of $K$ generated by all $\gamma\in \phi^{-\infty}(\alpha)$ that belong to an abelian extension of $K$. Then $K_\infty^{\ab}(\phi,\alpha)/K$ ramifies only at finitely many primes.
\end{theoremA}

\begin{proof}
In order to prove the first claim, start by noticing that it suffices to prove it for a finite extension of $K$. Hence up to replacing $K$ with a finite extension and conjugating with an appropriate $m\in \text{PGL}_2(K)$ we can assume that $\phi=f/g$ with $f,g\in K[x]$ coprime, $\deg f>\deg g$ and $f$ monic. Let $S$ be a finite set of primes of $K$ such that $\alpha$ and all coefficients of $f$ and $g$ are $S$-integers. Notice that if $\beta\in \phi^{-\infty}(\alpha)$ and $\mathfrak P$ is a prime of $K(\beta)$ not lying above any prime of $S$, then $v_{\mathfrak P}(\beta)\geq 0$, where $v_{\mathfrak P}$ is the corresponding valuation. This is easy to see inductively: it holds for any $\beta\in \phi^{-1}(\alpha)$ because $\beta$ is a solution of $f(x)-\alpha g(x)=0$, that is a monic polynomial of degree $d$ with $S$-integral coefficients, and the inductive step follows for the analogous reason. 

Now enlarge $S$ by adding all the finitely many primes of $K$ that lie above rational primes that ramify in $K$.

Let $\p$ be a prime of $K$ not lying in $S$, suppose that $\p$ ramifies in $K^{\ab}_\infty(\phi,\alpha)$ and let $p$ be the rational prime lying below $\p$. We will show that $p$ is bounded in terms of $\phi$, $\alpha$ and $K$. Clearly there exists an abelian point $\beta\in \phi^{-\infty}(\alpha)$ such that $\p$ ramifies in $K(\beta)$ but does not ramify in $K(\phi(\beta))$. Let $\Delta\in K(\phi(\beta))$ be the discriminant of the polynomial $f(x)-\phi(\beta)g(x)$. Notice that since $\p\notin S$ then $\Delta$ is an $S'$-integer, where $S'$ is the set of primes of $K(\phi(\beta))$ lying above $S$.

The assumptions on the ramification, together with the key fact that $K(\beta)/K$ is a Galois extension, show that if $\mathfrak P$ is any prime of $K(\phi(\beta))$ lying above $\p$, then $|\Delta|_{\mathfrak P}<1$. Indeed, this follows from the fact that any prime  of $K(\phi(\beta))$ that ramifies in $K(\beta)$ divides the relative discriminant of the extension $K(\phi(\beta))/K(\beta)$, and since the extension  is Galois, any prime ideal in $K(\phi(\beta))$ that lies over $\p$ ramifies in $K(\beta)$.

If we normalize $|\cdot |_{\mathfrak P}$ so that $|p|_{\mathfrak P}=1/p$, then we have

$$|\Delta|_{\mathfrak P}\leq p^{-1}$$
and therefore
$$
\prod_{\mathfrak P\mid \p}|\Delta|_{\mathfrak P}^{[K(\phi(\beta))_{\mathfrak P}:K_\p]} \leq p^{-[K(\phi(\beta)):K]},
$$
so that
\begin{equation}\label{height1}
h(\Delta)=h(\Delta^{-1})\geq \frac{1}{[K(\phi(\beta)):\Q]}\cdot [K(\phi(\beta)):K]\cdot \log p=\frac{\log p}{[K:\Q]}.
\end{equation}
Now notice that $\Delta$ is a polynomial expression in $\phi(\beta)$, whose coefficients and degree only depend on $\phi$. It follows immediately that
\begin{equation}\label{height2}
h(\Delta)\leq  C_1\cdot h(\phi(\beta))+C_2, 
\end{equation}
where $C_1$ and $C_2$ are constants that depend only on $\phi$, see~\cite[Theorem~3.11]{silverman2}. On the other hand, recall that $\phi(\beta)\in \phi^{-\infty}(\alpha)$, and the height of elements in $\phi^{-\infty}(\alpha)$ is bounded in terms of $\phi$ and $h(\alpha)$; this follows from the fact that $h(\phi(z))\geq \deg\phi\cdot h(z)+C_3$ for every $z\in \overline{K}$, where $C_3$ is a constant that depends only on $\phi$. Hence,~\eqref{height1} and~\eqref{height2} imply that $p$ is upper bounded in terms of $K$, $\phi$ and $h(\alpha)$, which concludes the proof.
%
\end{proof}
\begin{remark}
One might be tempted to reprove Theorem \ref{finite_ramification}, using exactly the same argument, for what could be called ``Galois part'' of $K_\infty(\phi,\alpha)$, namely the extension $K'$ of $K$ generated by all points in $\phi^{-\infty}(\alpha)$ that generate a Galois extension of $K$. However, the argument we used does not work because if $\p$ is a prime of $K$ that ramifies in $K'$, it is not necessarily true that one can find a point $\beta\in \phi^{-\infty}(\alpha)$ with the properties that $K(\beta)/K$ is Galois, $\p$ ramifies in $K(\beta)$ and $\p$ does not ramify in $K(\phi(\beta))$, since $K(\phi(\beta))/K$ is not necessarily Galois (while if $K(\beta)/K$ is abelian then $K(\phi(\beta))/K$ is abelian as well). On the other hand, the argument can be applied verbatim to show that if \emph{every} $\beta\in \phi^{-\infty}(\alpha)$ is Galois over $K$ then $K_\infty(\phi,\alpha)$ is ramified only at finitely many primes.
\end{remark}

Recall that if $\phi\in K(x)$ and $\alpha\in K$ we say that the pair $(\phi,\alpha)$ is \emph{eventually stable} if, writing $\phi^n=p_n/q_n$ for coprime $p_n,q_n\in K[x]$, the number of irreducible factors of $p_n-\alpha q_n$ is eventually constant as $n\to+\infty$. We say that $(\phi,\alpha)$ is \emph{stable} if $p_n-\alpha q_n$ is irreducible for every $n$.

\begin{remark}\label{ev_stable_eq}
A pair $(\phi,\alpha)$ is eventually stable if and only if there exists some $n\geq 1$ such that for every $\beta\in \phi^{-n}(\alpha)$ the pair $(\phi,\beta)$ is stable over $K(\beta)$. This is an easy application of Capelli's Lemma (for a proof see~\cite[Lemma~0.1]{FeiSch}), which states that if $f,g$ are polynomials over a field $K$ then $f\circ g$ is irreducible over $K$ if and only if $f$ is irreducible over $K$ and $g-\beta$ is irreducible over $K(\beta)$ for every root $\beta$ of $f$. The analogous statement holds when $f\in K[x]$ and $\phi=p/q\in K(x)$, where $p,q$ are coprime polynomials, that is, the numerator of $f\circ \phi$ is irreducible if and only if $f$ is irreducible over $K$ and $p-\beta q$ is irreducible over $K(\beta)$ for every root $\beta$ of $f$. Therefore, if, for some $n\ge 1$, the pair $(\phi,\beta)$ is stable for every $\beta\in \phi^{-n}(\alpha)$, then, if $f$ is the minimal polynomial of $\beta$ over $K$, $f$ is \emph{$\phi$-stable}, namely the numerator of $f\circ \phi^m$ is irreducible for every $m$. On the other hand $\phi^{-n}(\alpha)$ is finite, showing that $p_n-\alpha q_n$ factors as a product of $\phi$-stable factors, and in turn implying that $(\phi,\alpha)$ is eventually stable. Conversely, if $(\phi,\alpha)$ is eventually stable then there exists $n\geq 1$ such that $p_n-\alpha q_n=f_1\ldots f_r$ for some $r\geq 1$ and for every $m\geq n$ the polynomial $p_m-\alpha q_m$ factors as a product of $r$ irreducible factors as well. Now one can check that the $f_i$'s are $\phi$-stable, and this implies that $(\phi,\beta)$ is stable over $K(\beta)$ for every root $\beta$ of some $f_i$.
\end{remark}

It is conjectured in~\cite{jones_ev} that eventual stability is the generic condition, as follows.

\begin{conjecture}{{\cite[Conjecture~1.2]{jones_ev}}}\label{all_es}
Let $K$ be a number field, let $\phi\in K(x)$ and let $\alpha\in K$. The pair $(\phi,\alpha)$ is eventually stable if and only if $\alpha$ is not periodic for $\phi$.
\end{conjecture}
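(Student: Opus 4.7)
The plan is to handle the two directions of the biconditional separately. The direction ``$\alpha$ periodic implies $(\phi,\alpha)$ not eventually stable'' is elementary; the converse is the substance of Jones's conjecture and, as far as I know, is open in general.

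For the easy direction, suppose $\phi^n(\alpha)=\alpha$. Then $\alpha$ is a $K$-rational root of $p_n(x)-\alpha q_n(x)$. Writing $\phi^{-n}(\alpha)=\{\beta_1,\ldots,\beta_{d^n}\}$ with $\beta_1=\alpha$, the roots of $p_{kn}-\alpha q_{kn}$ are exactly the $x$ with $\phi^{(k-1)n}(x)\in\phi^{-n}(\alpha)$, so the standard iterated resultant identity gives $p_{kn}(x)-\alpha q_{kn}(x)=c\prod_i\bigl(p_{(k-1)n}(x)-\beta_i q_{(k-1)n}(x)\bigr)$ over $\overline{K}$. Grouping the $\beta_i$'s by Galois orbit over $K$ and using that $\{\alpha\}$ is itself a singleton orbit, this descends to a $K$-rational factorization
\[p_{kn}(x)-\alpha q_{kn}(x)=\bigl(p_{(k-1)n}(x)-\alpha q_{(k-1)n}(x)\bigr)\cdot R_k(x),\]
with $R_k\in K[x]$ of positive degree. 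Hence the number of $K$-irreducible factors of $p_{kn}-\alpha q_{kn}$ is strictly increasing in $k$, contradicting eventual stability.

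For the converse, my approach would combine Theorem \ref{finite_ramification} with the Capelli-type reformulation of Remark \ref{ev_stable_eq}. Non-eventual stability produces, for each $n$, some $\beta_n\in\phi^{-n}(\alpha)$ such that $(\phi,\beta_n)$ is not stable over $K(\beta_n)$, yielding new factorizations at arbitrary depth. I would then try to exploit that the fields $K(\beta_n)$ have bounded Weil height and, whenever the newly appearing factors are abelian, enjoy finite ramification by Theorem \ref{finite_ramification}. Combined with a dynamical analysis of how reducibility at level $n+1$ forces collisions between the critical orbit of $\phi$ and $\phi^{-n}(\alpha)$, in the spirit of classical Capelli/Odoni irreducibility criteria, one would hope to force some critical point of $\phi$ to be preperiodic onto the orbit of $\alpha$, and ultimately $\alpha$ itself to be periodic. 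The main obstacle is precisely this hard direction: Jones's conjecture is known only in restricted settings (small-degree polynomials, function fields, cases with a superattracting fixed point at infinity, etc.), and the general case plausibly requires sharp new dynamical irreducibility criteria well beyond the finite-ramification input alone.
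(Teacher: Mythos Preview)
The statement you are asked to prove is recorded in the paper as a \emph{conjecture} (attributed to Jones--Levy), not as a theorem; the paper offers no proof and uses it only as a hypothesis in Corollary~\ref{abelian_implies_pcf}(3). So there is no ``paper's own proof'' to compare against.

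Your treatment of the easy direction is correct and is the standard argument: if $\phi^n(\alpha)=\alpha$ then $p_{(k-1)n}-\alpha q_{(k-1)n}$ divides $p_{kn}-\alpha q_{kn}$ in $K[x]$ (both lie in $K[x]$, and the divisibility holds over $\overline{K}$ by comparing roots, hence over $K$), with a cofactor of positive degree since $d\ge 2$. This forces the number of irreducible factors to be unbounded. One small caveat: the resultant-type identity you wrote requires care with multiplicities when $\alpha$ lies in the post-critical set, but the simpler divisibility argument avoids this entirely and already yields the conclusion.

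For the converse, what you have written is not a proof, and you correctly flag this yourself. The outline you sketch---using Theorem~\ref{finite_ramification} and the Capelli reformulation to force critical collisions---does not constitute an argument: Theorem~\ref{finite_ramification} concerns only \emph{abelian} preimages, whereas the factors produced by failure of eventual stability need not generate abelian extensions, so the finite-ramification input has no obvious leverage here. The hard direction remains genuinely open in general, exactly as you say, and the paper makes no claim to prove it.
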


We now have the following consequence of Theorem~\ref{finite_ramification}.

\begin{corollary}\label{abelian_implies_pcf}
Let $K$ be a number field, let $\phi\in K(x)$ and let $\alpha\in K$ be non-exceptional.
\begin{enumerate}
\item If  $G_\infty(\phi,\alpha)$ is abelian, then $\phi$ is PCF.
\item If  $[K_\infty^{\ab}(\phi,\alpha):K]=\infty$ and $(\phi,\alpha)$ is eventually stable, then $\phi$ is PCF.
\item If Conjecture~\ref{all_es} holds true and $[K_\infty^{\ab}(\phi,\alpha):K]=\infty$, then $\phi$ is PCF.
\end{enumerate}
\end{corollary}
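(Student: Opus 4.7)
The plan is to handle (1) directly via Theorem~\ref{finite_ramification}, reduce (2) to (1) by a pigeonhole-plus-stability argument, and finally reduce (3) to (2) by a case analysis on whether $\alpha$ is periodic.

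For (1) I will observe that $G_\infty(\phi,\alpha)$ abelian forces $K_\infty(\phi,\alpha)\subseteq K^{\ab}$, hence $\phi^{-\infty}(\alpha)\subseteq K^{\ab}$, and Theorem~\ref{finite_ramification} immediately yields that $\phi$ is PCF.

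For (2) I will first invoke Remark~\ref{ev_stable_eq} to pick $n\geq 1$ such that $(\phi,\beta)$ is stable over $K(\beta)$ for every $\beta\in\phi^{-n}(\alpha)$. The hypothesis $[K_\infty^{\ab}(\phi,\alpha):K]=\infty$, combined with Remark~\ref{abelian_part_properties}, yields infinitely many abelian-over-$K$ points in $\phi^{-\infty}(\alpha)$; pigeonholing over the finite set $\phi^{-n}(\alpha)$ produces some $\beta$ whose backward orbit still contains infinitely many such points, and these remain abelian over $K(\beta)$ because $\gal(K(\beta,\gamma)/K(\beta))$ embeds into the abelian group $\gal(K(\gamma)/K)$. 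The key step will then exploit stability: if $\gamma\in\phi^{-m}(\beta)$ is abelian over $K(\beta)$, the irreducibility of $p_m-\beta q_m$ over $K(\beta)$ forces the Galois extension $K(\beta)(\gamma)/K(\beta)$ to contain every $K(\beta)$-conjugate of $\gamma$, so that the splitting field of $p_m-\beta q_m$ equals $K(\beta)(\gamma)$ and is abelian over $K(\beta)$. Since each $\phi^{-m}(\beta)$ is finite, the levels $m$ at which abelian points occur must be unbounded; and since subextensions of abelian extensions are abelian, this forces the splitting field of $p_m-\beta q_m$ over $K(\beta)$ to be abelian for every $m$. Taking the union gives an abelian extension $K_\infty(\phi,\beta)/K(\beta)$, so applying (1) over $K(\beta)$ (with $\beta$ non-exceptional since it has infinitely many preimages) concludes that $\phi$ is PCF.

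For (3), if $\alpha$ is not periodic then Conjecture~\ref{all_es} gives eventual stability and (2) applies directly. If $\alpha$ is periodic with orbit $O=\{\alpha,\phi(\alpha),\ldots,\phi^{p-1}(\alpha)\}$, I will note that the only periodic elements of $\phi^{-\infty}(\alpha)$ lie in $O$, so all but finitely many of the abelian preimages of $\alpha$ are non-periodic. For each non-periodic abelian $\gamma$, letting $k$ be minimal with $\phi^k(\gamma)\in O$, the element $\phi^{k-1}(\gamma)$ lies in the finite set $T:=\phi^{-1}(O)\setminus O$, which is non-empty because $\alpha$ is non-exceptional (otherwise $\phi^{-\infty}(\alpha)=O$ would be finite). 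A second pigeonhole on $T$ produces a non-periodic $\delta\in T$ with $\phi^{-\infty}(\delta)$ containing infinitely many abelian points; Conjecture~\ref{all_es} then gives eventual stability of $(\phi,\delta)$ over $K(\delta)$, and (2) applied over $K(\delta)$ to $(\phi,\delta)$ concludes.

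The principal technical point is the stability-to-abelian upgrade in (2): one must combine irreducibility from stability with the Galois property of abelian extensions to conclude that a single abelian root forces the whole splitting field to be abelian, while carefully tracking the base-field change from $K$ to $K(\beta)$. The periodic case of (3) is essentially combinatorial, but crucially relies on the non-exceptionality hypothesis to guarantee $T\neq\emptyset$.
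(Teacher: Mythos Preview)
Your overall strategy for all three parts matches the paper's proof, and parts (2) and (3) are argued correctly and in essentially the same way (pigeonhole onto a stable level, use stability plus transitivity to upgrade a single abelian root to an abelian splitting field, then descend to (1); and in (3), split off the periodic case via the finite set $\phi^{-1}(O)\setminus O$).

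There is, however, a gap in your treatment of (1). You write that ``Theorem~\ref{finite_ramification} immediately yields that $\phi$ is PCF'', but Theorem~\ref{finite_ramification} as stated only asserts that $K_\infty^{\ab}(\phi,\alpha)/K$ is ramified at finitely many primes (plus a statement about roots of unity). It says nothing about post-critical finiteness. The implication ``$K_\infty(\phi,\alpha)/K$ finitely ramified $\Rightarrow$ $\phi$ is PCF'' is a separate and non-trivial result, which the paper invokes as \cite[Theorem~5]{bridy}. Note also that you cannot appeal to Theorem~\ref{thm1} here: the ``Consequently'' clause of Theorem~\ref{thm1} is precisely what Corollary~\ref{abelian_implies_pcf}(1) is meant to justify, so citing it would be circular. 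Your argument for (1) should read: $G_\infty(\phi,\alpha)$ abelian gives $K_\infty(\phi,\alpha)=K_\infty^{\ab}(\phi,\alpha)$, so by Theorem~\ref{finite_ramification} the extension $K_\infty(\phi,\alpha)/K$ is finitely ramified, and then \cite[Theorem~5]{bridy} forces $\phi$ to be PCF.
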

\begin{proof}
(1) We have $K_\infty(\phi,\alpha)=K_\infty^{\ab}(\phi,\alpha)$, and hence $K_\infty(\phi,\alpha)/K$ is only ramified at finitely many primes by Theorem~\ref{finite_ramification}. It follows by~\cite[Theorem~5]{bridy} that $\phi$ is PCF.

(2) By Remark~\ref{ev_stable_eq}, eventual stability of the pair $(\phi,\alpha)$ implies the existence of some $n\geq 1$ such that if $\beta\in \phi^{-n}(\alpha)$ then the pair $(\phi,\beta)$ is stable over $K(\beta)$. Now since $\bigcup_{\beta\in \phi^{-n}(\alpha)} \phi^{-\infty}(\beta)$ differs from $\phi^{-\infty}(\alpha)$ by a finite set, the fact that $[K_\infty^{\ab}(\phi,\alpha):K]=\infty$ and the pigeonhole principle ensure the existence of some $\beta$ such that $\phi^n(\beta)=\alpha$ and such that $[K(\beta)_\infty^{\ab}(\phi,\beta)\colon K(\beta)]$ is infinite. Since $(\phi,\beta)$ is stable over $K(\beta)$, this implies that $K(\beta)_\infty(\phi,\beta)=K(\beta)_\infty^{\ab}(\phi,\beta)$ because for every $m$ the elements in $\phi^{-m}(\beta)$ live in the same $G_{K(\beta)}$-orbit, and hence they are all abelian. Now part (1) applies and we can conclude.

(3) If $\alpha$ is not periodic for $\phi$, then by Conjecture~\ref{all_es} the claim follows from part (2). Hence we can assume that $\alpha$ is periodic for $\phi$. Let $n$ be the exact period of $\alpha$ and let $\{\alpha=\alpha_0,\alpha_1,\ldots,\alpha_{n-1}\}$ be the forward orbit of $\alpha$. For every $i=0,\ldots,n-1$, we let $S_i=\{\gamma\in \phi^{-1}(\alpha_i)\colon \gamma \mbox{ is not periodic for }\phi\}$. Notice that at least one $S_i$ is non-empty, since otherwise $\alpha$ would be exceptional. Now let $F$ be the compositum $\prod_{i=1}^n\prod_{\gamma\in S_i}K(\gamma)_\infty^{\ab}(\phi,\gamma)$. We claim that $K_\infty^{\ab}(\phi,\alpha)\subseteq F$. In fact if $\beta\in \phi^{-\infty}(\alpha)$ is an abelian point that does not lie in $K$ then $\beta$ cannot be periodic for $\phi$, since it eventually enters a periodic orbit that is entirely contained in $K$. Hence there is some maximal $m\geq 0$ such that $\phi^m(\beta)$ is not periodic for $\phi$. For the same reason we must have $\phi^{m+1}(\beta)=\alpha_i$ for some $i\in\{0,\ldots,n-1\}$, and therefore $\gamma_i\coloneqq \phi^m(\beta)\in S_i$. This implies, in particular, that $\beta\in K(\gamma_i)_\infty^{\ab}(\phi,\gamma_i)\subseteq F$, since $\beta$ is certainly abelian over $K(\gamma_i)$, too.

Since by hypothesis $[K_\infty^{\ab}(\phi,\alpha):K]=\infty$, this shows that there exist $j\in \{0,\ldots,n-1\}$ and $\gamma_j\in S_j$ such that $K(\gamma_j)_\infty^{\ab}(\phi,\gamma_j)$ is an infinite abelian extension of $K$, and hence also an infinite abelian extension of $K(\gamma_j)$ in particular. But then we can use Conjecture~\ref{all_es} and part (2) again and conclude, since by construction $\gamma_j$ is not periodic for $\phi$.
\end{proof}


\section{Cyclotomic points for quadratic maps}\label{cyclo_quadratic}

When studying abelian points in backward orbits, it is a natural starting point to focus on the simplest ones, namely cyclotomic points.

\begin{definition}
Let $K$ be a number field, $\phi\in K(x)$ and $\alpha\in K$. We say that $\beta\in \phi^{-\infty}(\alpha)$ is a \emph{cyclotomic point} if it belongs to $K(\zeta_N)$ for some $N$-th root of unity $\zeta_N$. We call \emph{cyclotomic part} of $K_\infty(\phi,\alpha)$, and we denote it by $K_\infty^c(\phi,\alpha)$, the extension of $K$ generated by all cyclotomic points of $\phi^{-\infty}(\alpha)$.
\end{definition}

Remark~\ref{abelian_part_properties} holds verbatim for the cyclotomic part.

Cyclotomic points have been studied in~\cite{ostafe}, following ideas of Dvonicich and Zannier~\cite{DZ}, for a rather general class of rational maps. The results of~\cite{ostafe} suggest that having infinitely many cyclotomic points in a backward orbit is a very restrictive condition, and only functions that are conjugate to power or Chebyshev maps should provide instances of this phenomenon. We recall that a degree $d$ rational function $\phi\in K(x)$ is called \emph{special} if $\phi$ is $K^c$-conjugate to $x^{\pm d}$ or to $\pm T_d(x)$, where $T_d(x)$ is the Chebyshev polynomial of degree $d$.

\begin{definition}
Let $K$ be a number field, $\phi\in K(x)$ and $\alpha\in K$. We say that $(\phi,\alpha)$ is a \emph{special pair} if there exists a root of unity $\zeta$ such that $(\phi,\alpha)$ is $K^c$-conjugate to $(x^{\pm d},\zeta)$ or $(\pm T_d(x),\zeta+\zeta^{-1})$.
\end{definition}

We also take this opportunity to correct the definition of special rational function in~\cite[Definition~1.1]{ostafe} (and consequently also in~\cite[Definition~1.3]{OSSZ}), with the one above where the conjugacy of rational functions is with respect to $K^c$ rather than $K$.

We remark that being special is a notion that depends on the ground field as well, and not only on the function. For example, the function $2x^4$ is not special when seen as an element of $\Q(x)$, but it is special when seen in $\Q(\sqrt[3]{2})(x)$. To better explain why we ask for conjugacy over $K^c$, rather than over $\overline{K}$, we prove the following lemma, that is a straightforward extension of~\cite[Theorems~12 and~13]{andrews}.

\begin{lemma}\label{conjugacy_field}
Let $K$ be a number field, let $d\ge 2$ and let $\psi\in \{x^{\pm d},\pm T_d(x)\}$.
\begin{enumerate}
\item Let $\beta\in K$ be non-exceptional for $\psi$. Then the following are equivalent.
\begin{enumerate}[(a)]
\item $[K^c_\infty(\psi,\beta):K]=\infty$;
\item $[K^{\ab}_\infty(\psi,\beta):K]=\infty$;
\item There exists a root of unity $\zeta$ such that the pair $(\psi,\beta)$ is either $(x^{\pm d},\zeta)$ or $(\pm T_d(x),\zeta+\zeta^{-1})$;
\item $K_\infty(\psi,\beta)=K_\infty^{\ab}(\psi,\beta)=K_\infty^c(\psi,\beta)$.
\end{enumerate}
\item Let $\phi$ have degree $d$ and let $\alpha\in K$. Suppose that $\phi$ is $\overline{K}$-conjugate to $\psi$. Then $[K^c_\infty(\phi,\alpha):K]=\infty$ if and only if $(\phi,\alpha)$ is a special pair.
\end{enumerate}
\end{lemma}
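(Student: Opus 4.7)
The plan is to prove part (1) through the cyclic chain $(c) \Rightarrow (d) \Rightarrow (a),(b) \Rightarrow (c)$, essentially reproducing Andrews--Petsche's Theorems 12 and 13 and adapting them to cover $\psi = x^{-d}$. For $(c) \Rightarrow (d)$, I would compute the backward orbits explicitly: preimages of a root of unity under $x^{\pm d}$ are themselves roots of unity, and preimages of $\zeta+\zeta^{-1}$ under $\pm T_d$ have the form $\eta+\eta^{-1}$ for a root of unity $\eta$ (using the identity $T_d(w+w^{-1}) = w^d+w^{-d}$). Hence $\psi^{-\infty}(\beta) \subseteq K^c$ and $K_\infty = K_\infty^{\ab} = K_\infty^c$. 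The implications $(d) \Rightarrow (a),(b)$ are immediate from the Northcott property, since non-exceptionality of $\beta$ together with the standard bound $h(\gamma) \leq h(\beta)/d^n + O(1)$ in backward orbits yields infinitely many distinct preimages of bounded height, forcing $[K_\infty(\psi,\beta):K]=\infty$.

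The heart of part (1) is $(a),(b) \Rightarrow (c)$. For $\psi = x^{\pm d}$: any abelian preimage $\gamma \in K^{\ab}$ at level $n$ satisfies $h(\gamma) = h(\beta)/d^n$, so from $[K^{\ab}_\infty(\psi,\beta):K] = \infty$ I get preimages with $h(\gamma) \to 0$. The relative Amoroso--Zannier lower bound -- every $\gamma \in K^{\ab}$ not a root of unity has $h(\gamma) \geq c(K) > 0$ -- then forces $\gamma$ to be a root of unity for large $n$, and hence $\beta$ is a root of unity. For $\psi = \pm T_d$ I would use the semi-conjugacy $T_d \circ \pi = \pi \circ x^d$ with $\pi(x) = x+x^{-1}$: a preimage $\gamma = z+z^{-1}$ at level $n$ corresponds to $z$ with $z^{d^n} = \tilde\beta^{\pm 1}$ where $\pi(\tilde\beta) = \beta$, so $h(z) = h(\tilde\beta)/d^n \to 0$; since $z$ lies in a degree $\leq 2$ extension of $K^{\ab}$, a Lehmer-type bound in this mildly non-abelian setting (a close cousin of Amoroso--Zannier) forces $z$ to be a root of unity, and consequently $\beta = \tilde\beta+\tilde\beta^{-1}$ has the required form.

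For part (2), the ``if'' direction is immediate from part (1)(c) $\Rightarrow$ (d), since a $K^c$-conjugacy $f \in \PGL_2(K^c)$ preserves $K^c$ and the backward orbit of a special pair lies in $K^c$. For ``only if'', I pick $f \in \PGL_2(\overline{K})$ realizing $\phi = f\psi f^{-1}$, set $\alpha' = f^{-1}(\alpha)$, and take a number field $L \supseteq K$ containing the coefficients of $f$ and $\alpha'$. Since $K^c \subseteq L^c$ and $[L:K]<\infty$, the hypothesis $[K^c_\infty(\phi,\alpha):K]=\infty$ transfers through the $L$-conjugacy to $[L^c_\infty(\psi,\alpha'):L] = \infty$, and applying part (1) over $L$ places $\alpha'$ in the special form (root of unity, or $\eta + \eta^{-1}$); in particular $\psi^{-\infty}(\alpha') \subseteq K^c$. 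The final step is the descent of $f$: by hypothesis there are three distinct $\gamma_1,\gamma_2,\gamma_3 \in \phi^{-\infty}(\alpha) \cap K^c$, and each $\gamma_i' = f^{-1}(\gamma_i) \in \psi^{-\infty}(\alpha') \subseteq K^c$; by the $3$-transitivity of $\PGL_2$, $f$ is the unique Möbius transformation sending the triple $(\gamma_i')$ to $(\gamma_i)$, and the cross-ratio formula expresses its coefficients rationally in these data, so $f \in \PGL_2(K^c)$. Thus $(\phi,\alpha)$ is $K^c$-conjugate to the special pair $(\psi,\alpha')$.

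The main technical obstacle will be the $\psi = \pm T_d$ case of $(a),(b) \Rightarrow (c)$, since the relevant preimages $z$ live only in a small non-abelian (at most quadratic) extension of $K^{\ab}$; this requires a Lehmer-type lower bound slightly beyond the classical relative Amoroso--Zannier, most cleanly obtained via the Joukowski reduction to $x^d$ where the classical bound applies directly.
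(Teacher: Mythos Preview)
Your proposal is correct and follows essentially the same approach as the paper: the cycle $(c)\Rightarrow(d)\Rightarrow(a)\Rightarrow(b)\Rightarrow(c)$ for part (1), with the Andrews--Petsche height argument for the nontrivial step, and the three-point descent of the conjugating M\"obius map for part (2). Two minor differences are worth noting. First, rather than handling all four maps directly for $(b)\Rightarrow(c)$, the paper reduces $x^{-d}$ and $-T_d$ (for $d$ odd) to the cases already in Andrews--Petsche by passing to $\psi^2$, which equals $x^{d^2}$ or $T_{d^2}$ respectively; this is slicker than a case-by-case treatment. Second, the ``main technical obstacle'' you flag for $\pm T_d$ is not a real difficulty: the Amoroso--Zannier relative Dobrowolski bound applies to algebraic numbers of bounded degree over $K^{\ab}$, so your $z$ (quadratic over $K^{\ab}$) is already covered, and in any case Andrews--Petsche's Theorem~13 handles $T_d$ directly.
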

\begin{proof}
(1) The implications $(a) \implies (b)$, $(c)\implies (d)$ and $(d)\implies (a)$ are obvious, using the fact that $T_d(x+x^{-1})=x^d+x^{-d}$. The implication $(b)\implies (c)$ is essentially proved in~\cite{andrews} for $x^d$ and $T_d(x)$. Although the hypothesis in~\cite{andrews} is that $K_\infty(\psi,\beta)=K_\infty^{\ab}(\psi,\beta)$, rather than just $[K^{\ab}_\infty(\psi,\beta):K]=\infty$, the only property that is needed for the implication is that there exists a sequence $\{\beta_n\}_{n\geq 0}$ entirely contained in $K^{\ab}$ and such that $\beta_0=\beta$ and $\psi(\beta_n)=\beta_{n-1}$ for every $n\geq 1$. It is easy to construct iteratively such a sequence with the sole hypothesis that $[K^{\ab}_\infty(\psi,\beta):K]=\infty$: since $\psi^{-1}(\beta)$ is finite, there must be some $\beta_1\in \psi^{-1}(\beta)$ such that $\beta_1$ is an abelian point and $[K(\beta_1)_\infty^{\ab}:K(\beta_1)]=\infty$. Now the same reasoning applies to $\psi^{-1}(\beta_1)$, and so on.

To prove the implication for $\psi=x^{-d}$, just consider $\psi^2=x^{d^2}$. To prove it for $-T_d(x)$, notice that when $d$ is even, then $(-T_d)^n=-(T_d)^n(x)$, and thus this case follows from~\cite{andrews}. When $d$ is odd, then $(-T_d)^n(x)=(-1)^nT_d^n(x)$. However, since  $[K^{\ab}_\infty(\psi,\beta):K]=\infty$ implies $[K^{\ab}_\infty(\psi^2,\beta):K]=\infty$ and since $\psi^2=T_{d^2}$, the conclusion follows again from~\cite{andrews} applied with  $\psi^2$.
%

(2) The ``if'' part is obvious. Conversely, let $f=\frac{ax+b}{cx+d}$ be a function in $\PGL_2(\overline{K})$ such that $\psi=f\circ\phi\circ f^{-1}$, and suppose that $[K^c_\infty(\phi,\alpha):K]=\infty$. Let $\beta\coloneqq f(\alpha)$. Notice that $f^{-1}$ yields a bijection between $\psi^{-\infty}(\beta)$ and $\phi^{-\infty}(\alpha)$. If $f$ is defined over a finite extension $L/K$, it follows that $\psi^{-\infty}(\beta)$ contains infinitely many points in $L^c$.

Now assume that $\psi=x^{\pm d}$. By (1), $\beta$ is a root of unity, and therefore $\psi^{-\infty}(\beta)$ only contains roots of unity. Let $\zeta_1,\zeta_2,\zeta_3$ be three distinct elements in $\psi^{-\infty}(\beta)$. Since $f^{-1}$ is a bijection between $\psi^{-\infty}(\beta)$ and $\phi^{-\infty}(\alpha)$, we have $f^{-1}(\zeta_i)\in K^c$ for every $i=1,2,3$. Since $f^{-1}$ is determined uniquely by the images of three points, we deduce that $f^{-1}$ is defined over $K^c$.

If $\psi=\pm T_d(x)$, by (1) there is a root of unity $\zeta$ such that $\beta=\zeta+\zeta^{-1}$. Since $T_d(x+x^{-1})=x^d+x^{-d}$ for every $x$, it follows that all points in $\psi^{-\infty}(\beta)$ are of the form $\mu+\mu^{-1}$ where $\mu$ is a root of unity. Indeed, it is enough to look at $\psi^{-1}(\beta)$, since the conclusion will follow by induction. Let $\gamma\in \psi^{-1}(\beta)$, that is,  $\psi(\gamma)=\gamma^d+\gamma^{-d}=\zeta+\zeta^{-1}$. Seeing this as a quadratic equation in $\gamma^d$, we conclude that $\gamma^d$ has to be $\zeta$ or $\zeta^{-1}$. Hence $\psi^{-\infty}(\beta)\subseteq K^c$, and the same argument of the $x^{\pm d}$ case allows to conclude.
\end{proof}

As mentioned in the introduction, Conjecture~\ref{conjecture} states that if $K$ is a number field, $\phi\in K[x]$ is a polynomial of degree $d\ge 2$ and $\alpha\in K$ is not an exceptional point for $\phi$, then $G_\infty(\phi,\alpha)$ is abelian if and only if the pair $(\phi,\alpha)$ is $K^{\ab}$-conjugate to $(x^d,\zeta)$ or $(\pm T_d(x),\zeta+\zeta^{-1})$ for a root of unity $\zeta$. When $K=\Q$ the abelian part of $K_\infty(\phi,\alpha)$ coincides with its cyclotomic one, and therefore the problem studied in~\cite{ostafe} is closely related to Conjecture~\ref{conjecture}.

The following lemma follows in a straightforward way from the results of~\cite{ostafe}.

\begin{lemma}\label{superattracting_point}
Let $\phi\in K(x)$ have degree $d\ge 2$, and suppose that $\phi$ has a periodic critical point. Then $[K^c_\infty(\phi,\alpha):K]=\infty$ if and only if $(\phi,\alpha)$ is a special pair.
\end{lemma}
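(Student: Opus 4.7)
My plan is as follows. The ``if'' direction is immediate from Lemma~\ref{conjugacy_field}(2), since a special pair forces $\phi$ to be $\overline K$-conjugate to one of $x^{\pm d}$ or $\pm T_d$. For the converse, the strategy is to use the periodic critical point to reduce to the ``fixed superattracting point at infinity'' case already handled by the consequence of~\cite[Theorem~1.2]{ostafe} recalled in the introduction. Let $c$ be a periodic critical point of $\phi$ of period $m\geq 1$; after enlarging $K$ to a finite extension $L/K$ over which $c$ is defined and conjugating $\phi$ by some $f\in\PGL_2(L)$ that sends $c$ to $\infty$, I may assume that $\infty$ is a fixed critical point of $\phi^m$, so that $\phi^m=P/Q$ with coprime $P,Q\in L[x]$ and $\deg P>\deg Q+1$. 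The hypothesis $[L^c_\infty(\phi,\alpha):L]=\infty$ is preserved by this finite extension and conjugation.

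The key reduction is the decomposition
\[
\phi^{-\infty}(\alpha)=\bigcup_{r=0}^{m-1}\bigcup_{\beta\in\phi^{-r}(\alpha)}(\phi^m)^{-\infty}(\beta)
\]
of the backward orbit of $\alpha$ under $\phi$ as a finite union of backward orbits under $\phi^m$. The Northcott argument of Remark~\ref{abelian_part_properties}, applied to the cyclotomic part, converts the hypothesis into the existence of infinitely many distinct cyclotomic points in $\phi^{-\infty}(\alpha)$; pigeonholing over the decomposition produces some $\beta_0$ for which $(\phi^m)^{-\infty}(\beta_0)$ still contains infinitely many cyclotomic points, now viewed over $L(\beta_0)$. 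Since $\phi^m$ has a fixed superattracting point at $\infty$, the consequence of~\cite[Theorem~1.2]{ostafe} recalled in the introduction then forces $\phi^m$ to be $\overline K$-conjugate to $x^D$ or $\pm T_D$, where $D=d^m$.

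To finish one must descend from $\phi^m$ to $\phi$. Choosing $h\in\PGL_2(\overline K)$ with $h\phi^m h^{-1}=\tau\in\{x^D,\pm T_D\}$, the map $\psi:=h\phi h^{-1}$ has degree $d$ and satisfies $\psi^m=\tau$, hence commutes with $\tau$. By the classical theorem of Ritt and Julia on rational functions commuting with a power (resp.\ Chebyshev) map, $\psi$ must be of the form $\zeta x^{\pm d}$ with $\zeta^{D-1}=1$ (resp.\ $\pm T_d$), and in either case a further $\overline K$-conjugation by an appropriate root of $\zeta$ brings $\psi$ to $x^{\pm d}$ (resp.\ $\pm T_d$) on the nose. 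Therefore $\phi$ is $\overline K$-conjugate to a special function, and a final application of Lemma~\ref{conjugacy_field}(2), using the standing hypothesis $[K^c_\infty(\phi,\alpha):K]=\infty$, yields that $(\phi,\alpha)$ is a special pair. I expect the descent from $\phi^m$ to $\phi$ to be the main technical point; all earlier steps are Northcott/pigeonhole arguments or direct appeals to~\cite{ostafe} and Lemma~\ref{conjugacy_field}.
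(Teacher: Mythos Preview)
Your proof is correct and follows the same overall architecture as the paper's: pass to an iterate with a superattracting fixed point at $\infty$, invoke \cite{ostafe} to force that iterate to be $\overline K$-conjugate to a power or Chebyshev map, descend to $\phi$ itself, and finish with Lemma~\ref{conjugacy_field}(2).

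The one genuine difference lies in the descent step. The paper appeals to the classical fact (see \cite{silverman4} or \cite[Theorem~1.7]{silverman2}) that if some iterate of $\phi$ is $\overline K$-conjugate to a polynomial then $\phi$ is $\overline K$-conjugate to a polynomial or to $1/x^d$; in the polynomial case it then re-applies \cite{ostafe} directly to $\phi$ (now a map with $\infty$ superattracting and fixed). You instead observe that $\psi=h\phi h^{-1}$ satisfies $\psi^m=\tau$ and hence commutes with $\tau$, and invoke the Ritt--Julia classification to pin down $\psi$. Both routes are standard and yield the same conclusion; yours is arguably more direct since it avoids a second pass through \cite{ostafe}, at the cost of citing the commuting-maps classification (which, in the Chebyshev case, tacitly uses that $\psi$ is forced to be a polynomial because its exceptional set coincides with that of $T_D$). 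A small inaccuracy: in the power case the exact order of $\zeta$ is $(D-1)/(d-1)$ rather than $D-1$, but this is immaterial since any root of unity can be absorbed by a scalar conjugation.
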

\begin{proof}
We first note by Remark~\ref{abelian_part_properties} that $[K^c_\infty(\phi,\alpha):K]=\infty$ if and only if there are infinitely many cyclotomic points in $\phi^{-\infty}(\alpha)$. If $(\phi,\alpha)$ is a special pair, then by definition it follows immediately that there are infinitely many cyclotomic points in $\phi^{-\infty}(\alpha)$, and thus $[K^c_\infty(\phi,\alpha):K]=\infty$.

Therefore, we discuss the converse implication. We assume that there are infinitely many cyclotomic points in $\phi^{-\infty}(\alpha)$ and we want to conclude that $(\phi,\alpha)$ is a special pair.

By the hypothesis that $\phi$ has a periodic critical point, there exists $n\geq 1$ such that $\phi^n$ has a superattracting fixed point. Up to conjugating $\phi^n$, possibly over a finite extension $L$ of $K$, we can assume that such superattracting fixed point is the point at infinity. But that means precisely that $\phi^n$ can be written as $f/g$ with $f,g\in L[x]$ and $\deg f>\deg g+1$. 

We can now  proceed exactly as in the proof of~\cite[Theorem~1.2]{ostafe} to conclude that $\phi^n$ is $\overline{K}$-conjugate to $x^{dn}$ or $\pm T_{dn}(x)$. Indeed, by our assumption we obtain that there are infinitely many $\beta\in K^c$ such that $\phi^{n\ell}(\beta)=\alpha$ for some $\ell\ge 1$. This is equivalent to having infinitely many $\beta\in K^c$ such that $\phi_\alpha^{n\ell}(\beta)=1$ for some $\ell\ge 1$, where $\phi_\alpha=\alpha^{-1}\phi(\alpha x)$. We note that $\phi_\alpha^n$ satisfies same degree property as $\phi^n$, and thus applying exactly the same proof of~\cite[Theorem~1.2]{ostafe}\footnote{We take the occasion to remark that the formulation of~\cite[Corollary]{FZ}, that is used 
in~\cite[Theorem~1.2]{ostafe}, is incorrect, as $\pm x^d$ should be replaced by $x^{\pm d}$.} (with $h$ replaced by $\phi_\alpha^n$, looking only at the preimages of the root of unity $1$ rather than any root of unity, see also Remark~\ref{rem:DZ HIT} below), we conclude that $\phi_\alpha^n$ is $\overline{K}$-conjugate to $x^{dn}$ or $\pm T_{dn}(x)$. We also refer to \cite{ferra5} for an exposition of the ideas used in the proof of \cite[Theorem~1.2]{ostafe}.

Now it is a classic result in arithmetic dynamics (see for example~\cite{silverman4} or~\cite[Theorem~1.7]{silverman2}) that if $\phi^n$ is $\overline{K}$-conjugate to a polynomial for some $n$, then $\phi$ is itself $\overline{K}$-conjugate to a polynomial or to $1/x^d$. If $\phi$ is $\overline{K}$-conjugate to a polynomial we can re-apply the aforementioned  proof of~\cite[Theorem~1.2]{ostafe} to $\phi$ and get that $\phi$ is in fact $\overline{K}$-conjugate to $x^d$ or $\pm T_d(x)$. Now we apply Lemma~\ref{conjugacy_field} to conclude.
\end{proof}

\begin{remark}\label{rem:DZ HIT}
In the proof of Lemma~\ref{superattracting_point}, we have used the proof of~\cite[Theorem~1.2]{ostafe}. We note that the extra condition
 in~\cite[Theorem~1.2]{ostafe} on the rational function $\phi$ is to apply a cyclotomic version of the Hilbert Irreducibility Theorem~\cite[Corollary~2]{DZ}, which is not needed in our present case.
\end{remark}

\begin{remark} 
We remark that the same argument that proves Lemma~\ref{superattracting_point} can be used to obtain a generalization of~\cite[Theorem~2]{DZ} to rational functions with a periodic critical point. 
It is a question of independent interest to get an explicit value of the constant $c$ in the result of  Loxton~\cite[Theorem~1]{loxton}, which is one of the main tools in the proof of~\cite[Theorem~2]{DZ}. This, together with an effective version of the torsion coset theorem due to Aliev and Smyth~\cite{aliev}, would lead to an explicit bound on the number of cyclotomic preperiodic points in the result of Dvornicich and Zannier~\cite[Theorem~L]{DZ}, as well as explicit versions
of the results of~\cite{Chen,ostafe}.
\end{remark}

Lemma~\ref{superattracting_point} immediately implies the following corollary.
\begin{corollary}\label{ap_conj_polys}
Let $\phi\in \Q[x]$ be a polynomial of degree $d\ge 2$ and let $\alpha\in \Q$ be non-exceptional for $\phi$. Then Conjecture~\ref{conjecture} holds true for $(\phi,\alpha)$.
\end{corollary}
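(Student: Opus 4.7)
The plan is to observe that the corollary is an essentially immediate consequence of Lemma~\ref{superattracting_point} combined with the Kronecker--Weber theorem. Since $\phi\in \Q[x]$ is a polynomial of degree $d\geq 2$, the point at infinity is a totally ramified fixed point of $\phi$, hence a fixed (in particular periodic) critical point. This puts $\phi$ precisely in the situation where Lemma~\ref{superattracting_point} applies.

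One direction of Conjecture~\ref{conjecture} is straightforward: if $(\phi,\alpha)$ is $\Q^{\ab}$-conjugate to $(x^d,\zeta)$ or $(\pm T_d(x),\zeta+\zeta^{-1})$ for some root of unity $\zeta$, then the identity $T_d(x+x^{-1})=x^d+x^{-d}$ shows that the backward orbit consists of quantities of the form $\mu$ or $\mu+\mu^{-1}$ with $\mu$ a root of unity, all of which lie in $\Q^{\ab}$. So I would just note this and focus on the nontrivial direction.

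For the nontrivial direction, assume $\phi^{-\infty}(\alpha)\subseteq \Q^{\ab}$. By Kronecker--Weber, $\Q^{\ab}=\Q^c$, so every backward-orbit point is cyclotomic. Since $\alpha$ is non-exceptional, $\phi^{-\infty}(\alpha)$ is infinite, and by Northcott (as invoked in Remark~\ref{abelian_part_properties}, applied to the cyclotomic part) this forces $[\Q_\infty^c(\phi,\alpha):\Q]=\infty$. Lemma~\ref{superattracting_point} then concludes that $(\phi,\alpha)$ is a special pair, meaning there is a root of unity $\zeta$ such that $(\phi,\alpha)$ is $\Q^c$-conjugate to one of $(x^{\pm d},\zeta)$ or $(\pm T_d(x),\zeta+\zeta^{-1})$.

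The only point requiring a small extra remark is that the case $\psi=x^{-d}$ must be ruled out, since Conjecture~\ref{conjecture} lists only $x^d$ and $\pm T_d$. But if $\phi=f^{-1}\circ x^{-d}\circ f$ with $f\in\PGL_2$, then the two totally ramified critical points of $x^{-d}$, namely $0$ and $\infty$, form a 2-cycle, so $\phi$ would have a 2-cycle of totally ramified critical points and no fixed totally ramified critical point. This contradicts the fact that $\infty$ is a fixed totally ramified critical point of the polynomial $\phi$. Hence only $(x^d,\zeta)$ and $(\pm T_d,\zeta+\zeta^{-1})$ remain, and using $\Q^c=\Q^{\ab}$ one last time gives exactly the statement of Conjecture~\ref{conjecture}. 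The main content has already been done in Lemma~\ref{superattracting_point}, so there should be no genuine obstacle here.
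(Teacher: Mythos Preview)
Your proposal is correct and follows the same approach as the paper: observe that $\Q^{\ab}=\Q^c$ by Kronecker--Weber and that $\infty$ is a fixed critical point for any polynomial, then invoke Lemma~\ref{superattracting_point}. You supply more detail than the paper's two-line proof, including the explicit elimination of the $x^{-d}$ case, which the paper leaves implicit.
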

\begin{proof}
Just notice that $\Q^{\ab}=\Q^c$, and that $\infty$ is a fixed critical point for any polynomial of degree at least two.
\end{proof}

Of course it is a natural question how to extend Conjecture~\ref{conjecture} to all rational functions. As we will see in Section~\ref{lattes}, one cannot hope to prove in full generality that $G_\infty(\phi,\alpha)$ is abelian if and only if the pair $(\phi,\alpha)$ is special, as Latt\`es maps provide non-special examples of rational functions with infinitely many abelian points (but not infinitely many cyclotomic points) in backward orbits. However, Corollary~\ref{abelian_implies_pcf} implies that in order for $G_\infty(\phi,\alpha)$ to be abelian it is necessary for $\phi$ to be PCF, and being PCF is a very restrictive condition. In particular, the main result of~\cite{benedetto2} implies that for a given number field $K$ and a given $d\ge 2$, there exists a finite set $S$ of PCF rational maps in $K(x)$ of degree $d$ such that any PCF map in $K(x)$ of degree $d$ is either $\overline{K}$-conjugate to a flexible Latt\`es map or to an element of $S$.

For quadratic rational functions in $\Q(x)$, the set $S$ has been determined explicitly in~\cite{lukas} (it consists of 12 conjugacy classes) and of course there are no $\Q$-Latt\`es maps of degree 2 with rational coefficients. This allows to prove the following result for rational functions in the spirit of Conjecture~\ref{conjecture}.

\begin{theoremA}\label{ap_conjecture}
Let $\phi\in \Q(x)$ be a non-special rational map of degree $d\geq 2$, let $\alpha\in \Q$ be a non-exceptional point for $\phi$ and suppose that $G_\infty(\phi,\alpha)$ is abelian. Then the following hold.
\begin{enumerate}
\item $\phi$ is PCF and all of its critical points are strictly preperiodic.
\item If $d=2$, then $\phi$ is $\Q$-conjugate to either $\frac{2x^2+4x+4}{-x^2+4}$ or $\frac{1}{2(x-1)^2}$.
\end{enumerate}\end{theoremA}

\begin{proof}
(1) By Corollary~\ref{abelian_implies_pcf}, $\phi$ must be PCF. By Lemma~\ref{superattracting_point}, no critical point can be periodic since $\phi$ is not special.

In order to prove (2), first use again the fact that, by Corollary~\ref{abelian_implies_pcf}, the map $\phi$ is PCF. Now the authors of~\cite{lukas} classified all 12 $\overline{\Q}$-conjugacy classes of quadratic PCF rational functions, by giving one explicit representative for each class. Moreover, they show that the only ones with non-trivial twists (i.e.\ the only ones that admit $\overline{\Q}$-conjugates that are not $\Q$-conjugates) are $x^2$ and $1/x^2$, and these are treated in Lemma~\ref{conjugacy_field}. We refer to~\cite[Table~1]{lukas} for a list of representatives with their post-critical portraits. Hence it is enough for our purposes to study the remaining 10 representatives. Two of the remaining classes are represented by polynomials: $x^2-2$ and $x^2-1$. The first one is the Chebyshev polynomial of degree 2 (which is special and thus excluded by our hypothesis), and the second one fulfils the hypothesis of Lemma~\ref{superattracting_point} (and moreover it has been proven in~\cite{ferra1} by completely different arguments that $G_\infty(x^2-1,\alpha)$ is nonabelian for every $\alpha\in \Q$).

Of the remaining 8 classes, 4 of them have at least one periodic critical point. Hence, they fulfil again the hypotheses of Lemma~\ref{superattracting_point}. The last four classes represent PCF rational functions whose critical points are both strictly preperiodic. Representatives for these classes are: $\frac{2}{(x-1)^2}$, $\frac{2x^2+4x+4}{-x^2+2}$, $\frac{2x^2+4x+4}{-x^2+4}$ and $\frac{1}{2(x-1)^2}$. To conclude the proof, we have to show that $G_\infty(\phi,\alpha)$ is not abelian for any $\alpha\in \Q$ when $\phi=\frac{2}{(x-1)^2}$ or $\frac{2x^2+4x+4}{-x^2+2}$. This has been done via computations in Magma~\cite{magma}, as we now explain. Let $t$ be an indeterminate over $\Q$. First, let $\phi=\frac{2}{(x-1)^2}$, and let $M$ be the splitting field of $\phi^5-t$ over $\Q(t)$. Using the Magma intrinsic {\tt GaloisSubgroup} one can verify that the constant field of $M$ contains $\Q(i,\sqrt{2+2\sqrt{2}})$, that is a normal non-abelian extension of $\Q$. Now it is a standard fact in the theory of function fields that if $\alpha\in \Q$ and  $P_{\alpha}$ is the associated place of degree $1$ of $\Q(t)$, then the decomposition group of any place of $M$ lying above $P_{\alpha}$ surjects onto the Galois group of the constant field of $M$ (see for example \cite[Proposition 9.8]{rosen}). On the other hand, when $P_\alpha$ does not ramify in $M$ then the decomposition group of any place of $M$ lying above $P_\alpha$ is isomorphic to the specialized Galois group $\gal(\phi^5-\alpha)$ (for a proof, see for example~\cite[Lemma~2]{neftin}). Hence, for every $\alpha\in \Q$ such that $P_\alpha$ is unramified in $M$ we have that $G_5(\phi,\alpha)$ is non-abelian, since it surjects onto $\gal(\Q(i,\sqrt{2+\sqrt{2}})/\Q)$, and therefore in particular $G_\infty(\phi,\alpha)$ is non-abelian. Now it is just a simple finite computation to verify that $\gal(\phi^5-\alpha)$ is non-abelian for the ramified $\alpha$'s, that are just $\alpha=0,2$.\footnote{We remark that if $F$ is the constant field of $M$, it is also true that for all but finitely many $\alpha\in \Q$ the splitting field of $\phi^5-\alpha$ over $\Q$ contains $F$, as can be argued by writing a generator for $F$ in terms of the roots of $\phi^5-t$ and then specializing. However, finding the exceptional $\alpha$'s in this case would require a tough computation, since the splitting field of $\phi^5-t$ has large degree.}

The case of $\phi=\frac{2x^2+4x+4}{-x^2+2}$ is treated in the same way: one checks, using the same intrinsic, that the splitting field of $\phi^4-t$ contains again $\Q(i,\sqrt{2+2\sqrt{2}})$, so that $G_4(\phi,\alpha)$ cannot be abelian for unramified $\alpha$'s. The only ramified value in this case is $\alpha=-2$.
\end{proof}

It is plausible that even when $\phi=\frac{2x^2+4x+4}{-x^2+4}$ or $\frac{1}{2(x-1)^2}$ the group $G_\infty(\phi,\alpha)$ cannot be abelian for any $\alpha\in \Q$; however in order to use the same trick we used in the proof of Theorem~\ref{ap_conjecture} it is not sufficient to consider the first five iterates of $\phi$, because for those the constant field of the splitting field of $\phi^n-t$ is abelian. We remark that extending Lemma~\ref{superattracting_point} to all rational functions would allow to prove directly that if $\phi\in \Q(x)$ and $\alpha\in \Q$ is not exceptional for $\phi$, then $G_\infty(\phi,\alpha)$ is abelian if and only if the pair $(\phi,\alpha)$ is special.

\vspace{3mm}
Pairs $(\phi,\alpha)$ with $\phi\in\Q(x)$ and $\alpha\in \Q$ that fulfil the hypotheses of Lemma~\ref{superattracting_point} enjoy a stronger property than the one assumed by Theorem~\ref{ap_conjecture}: it is enough for them to have $[\Q_\infty^c(\phi,\alpha):\Q]=\infty$, rather than $\Q_\infty^c(\phi,\alpha)=\Q_\infty(\phi,\alpha)$, in order to be special. This is not entirely surprising, as according to Conjecture~\ref{all_es} most of them are expected to be eventually stable. As shown in the proof of Corollary~\ref{abelian_implies_pcf}, if $(\phi,\alpha)$ is eventually stable and $[\Q_\infty^c(\phi,\alpha):\Q]=\infty$, then there exists some $\beta\in \phi^{-\infty}(\alpha)$ such that $\Q(\beta)_\infty(\phi,\beta)=\Q(\beta)_\infty^{\ab}(\phi,\beta)$.

To conclude this section, we will show that for certain classes of degree two rational functions $\phi$ over a number field $K$ that do not necessarily fulfil the hypotheses of Lemma~\ref{superattracting_point}, the extension $K_\infty^c(\phi,\alpha)/K$ is still finite for any $\alpha$.

Notice that if $\phi\in K(x)$ is a rational function of degree 2 not conjugate to a polynomial, then $\phi$ is conjugate over $\overline{K}$ to a function of the form $ax+\frac{a}{x}+c$ for some $a\neq 0$.

First, recall the following result from~\cite{jones_ev} and the notation therein. Let $v$ be a discrete valuation on a field $K$ and denote by $\p$ the associated prime ideal $\{t \in K: v(t)>0\}$ of the ring $R=\{t\in K: v(t)\ge 0\}$. Let $k$ be the residue field $R/\p$. We denote by $\overline{t}\in\PP^1(k)$ the reduction modulo $\p$ of $t\in \PP^1(K)$, and we denote by $\overline{\phi}$ the rational function obtained from $\phi\in K(x)$ by reducing each coefficient modulo $\p$. We say that $\phi$ has \emph{good reduction} at $\p$ if $\deg \phi=\deg\overline{\phi}$.

\begin{theorem}[{{\cite[Theorem~4.6]{jones_ev}}}]\label{ev_stability_sufficient}
Let $K$ be a field and let $v$ be a discrete valuation on $K$ with valuation ring $R$, maximal ideal $\p$ and residue field $k$. Let $\phi\in K(x)$ have degree $d\ge 2$ and let $\alpha\in K$. Suppose that $\phi$ has good reduction at $\p$, $\phi(\alpha)\neq \alpha$ and $\overline{\phi}^{-1}(\overline{\alpha})=\{\overline{\alpha}\}$, where $\overline{\phi}$ is seen as a self-rational map of $\mathbb P^1_k$. Then $(\phi,\alpha)$ is eventually stable.
\end{theorem}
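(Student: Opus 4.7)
The plan is to apply the equivalent formulation of eventual stability in Remark~\ref{ev_stable_eq}: it suffices to exhibit $n\ge 1$ such that $(\phi,\beta)$ is stable over $K(\beta)$ for every $\beta\in\phi^{-n}(\alpha)$. Passing to the completion of $K$ at $\p$ and conjugating by the translation $x\mapsto x-\alpha$ (defined over $R$), we may assume $\overline{\alpha}=0$ in $k$. Writing $\phi=f/g$ with $f,g\in R[x]$ coprime and of the correct degrees, the hypothesis $\overline{\phi}^{-1}(0)=\{0\}$ forces $\overline f(x)=c\,x^d$ for some $c\in k^{\times}$, so $f(x)=cx^d+h(x)$ with $h\in\p R[x]$ of degree $<d$ and $\overline g(0)\in k^{\times}$. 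The condition $\phi(\alpha)\ne\alpha$ then becomes $N:=v(f(0))>0$.

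The heart of the argument is a Newton polygon analysis of the polynomials $P_\gamma(x):=f(x)-\gamma g(x)$, whose roots are the $\phi$-preimages of $\gamma$. By induction on the depth, every $\gamma\in\phi^{-n}(0)$ has strictly positive valuation at any prime of $\overline K$ above $\p$, so $\overline{P_\gamma}=c\,x^d$ and every preimage of $\gamma$ has strictly positive valuation as well. Tracking the maximum valuation $V_n$ attained on $\phi^{-n}(0)$: if $V_n>1$, the leftmost edge of the Newton polygon of $P_\gamma$ (with $v(\gamma)=V_n$) has slope of magnitude at most $V_n-1$, so $V_{n+1}\le V_n-1$; while if $V_n\le 1$, every middle coefficient of $P_\gamma$ lies weakly above the segment from $(0,V_n)$ to $(d,0)$, producing a single Newton polygon slope $-V_n/d$ and therefore $V_{n+1}=V_n/d$.

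Once in the single-slope regime $V_n<1$ (which is reached after finitely many iterations, since $V_n$ loses at least $1$ whenever it exceeds $1$), all elements of $\phi^{-n}(0)$ share the common valuation $V_n=V_{n_0}/d^{n-n_0}$. Writing $V_{n_0}=A/B$ in lowest terms and letting $G:=\gcd(A,d^\infty)$ be the stable $d$-part of $A$, for $n$ sufficiently large the numerator of $V_n$ in lowest terms is coprime to $d$. Translating into the value group of $K(\beta)$, the Newton polygon slope of $P_\beta$ then has denominator exactly $d$, forcing each irreducible factor of $P_\beta$ over $K(\beta)$ to have degree divisible by $d$ and hence $P_\beta$ to be irreducible. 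Iterating along the backward tree from any such $\beta$ yields that $(\phi,\beta)$ is stable over $K(\beta)$, and Remark~\ref{ev_stable_eq} then gives eventual stability of $(\phi,\alpha)$.

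The main obstacle will be the Newton polygon bookkeeping in the early iterations, when $V_n$ may still exceed $1$ and the polygon of $P_\gamma$ can split into several edges producing preimages with distinct valuations; one must check that every such branch eventually enters the single-slope regime, and that once inside it the ramification indices of $K(\beta)/K$ grow by the full factor $d$ at each level. This last point relies on the stabilization $\gcd(M,d^m)=\gcd(M,d^\infty)$ for $m$ large, combined with the totally ramified character of the successive extensions inherited from the Newton polygon.
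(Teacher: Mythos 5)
The paper does not actually prove this statement --- it is quoted from \cite[Theorem~4.6]{jones_ev} --- so I am comparing your proposal with the strategy of the cited proof. Your overall plan (normalise so that $\overline{\alpha}=0$, deduce $\overline f=c\,x^d$ and $\overline g(0)\neq 0$ from good reduction and $\overline{\phi}^{-1}(\overline{\alpha})=\{\overline{\alpha}\}$, and then drive the valuations of backward-orbit elements into a single-slope Newton-polygon regime where each pullback divides the valuation exactly by $d$) is indeed the right one and is essentially that of Jones--Levy. The first genuine problem is in the descent bookkeeping you flag yourself: the left endpoint of the Newton polygon of $P_\gamma$ sits at height $v(f(0)-\gamma g(0))$, which equals $\min(N,v(\gamma))$ only when $v(\gamma)\neq N$; when $v(\gamma)=N$ it can be arbitrarily large, and it is $+\infty$ precisely when $\gamma=\phi(0)$, i.e.\ when $\alpha$ is periodic --- a degeneracy you never exclude (it is excluded, e.g.\ because $w(\phi(z)-\phi(z'))>w(z-z')$ for distinct $z,z'$ in the residue disk of $\alpha$, which forbids cycles of length $\ge 2$ there). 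The repair is to prove inductively that every $\gamma\in\phi^{-n}(0)$ with $n\ge 1$ satisfies $0<w(\gamma)<N$ (the leftmost slope of the polygon of $P_\gamma$ has magnitude strictly smaller than its left height because all intermediate heights are positive), so that no cancellation ever occurs; one then gets $V_{n+1}\le\max(V_n/2,\,V_n-1)$ --- not $V_n-1$, since the leftmost edge may run all the way to $(d,0)$ --- which still lands in the regime $w<1$ after boundedly many steps.

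The more serious gap is the endgame. From $w(\beta)=V_{n_0}/d^{\,n-n_0}$ you may conclude only that $e(K(\beta)/K)$ is \emph{divisible by} the denominator of $w(\beta)$, not that it equals it. If $K(\beta)/K$ carries extra ramification of degree divisible by $d$, then $v_\beta(\beta)=e(K(\beta)/K)\cdot w(\beta)$ need not be coprime to $d$, the Newton slope of $P_\beta$ over $K(\beta)$ need not have denominator exactly $d$, and irreducibility of $P_\beta$ over $K(\beta)$ does not follow; so the route through Remark~\ref{ev_stable_eq} is not available as written. The robust conclusion is the lower bound $[K(\beta):K]\ge e(K(\beta)/K)\ge d^{\,n-n_0}/G$, with $G$ your stabilised $d$-part. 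Since good reduction forces $\phi^n(\infty)\neq\alpha$, the polynomial $p_n-\alpha q_n$ has degree $d^n$, so it has at most $G\,d^{\,n_0}$ irreducible factors; as this count is non-decreasing in $n$ (each factor at level $n$ spawns at least one at level $n+1$ by Capelli's lemma), it is eventually constant, which is exactly eventual stability. This bounded-factor-count argument is the endgame of the cited proof and is what you should substitute for the irreducibility claim.
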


We also need the following result.

\begin{lemma}
\label{lem:roots of 1}
Let $K$ be a number field, let $\phi\in K(x)$ and let $\alpha\in K$. If $p>d=\deg\phi$ is a prime such that $p^{N-1}$ does not divide $[K:\Q]$ for some $N\geq 2$, and $\zeta_{p^N}$ is a primitive $p^N$-th root of unity, then $\zeta_{p^N}\notin K_\infty(\phi,\alpha)$.
\end{lemma}

\begin{proof}
We  first notice that for every $n$ the group $G_n(\phi,\alpha)$ is a subgroup of $S_d\wr\ldots\wr S_d$ ($n$-fold wreath product), and $|S_d\wr\ldots\wr S_d|$ is a power of $d!$ (see for example~\cite{ferra2}). Hence if $\ell$ is a prime dividing $|G_n(\phi,\alpha)|$ then $\ell\leq d$. 

We now assume $p>d$, and hence $p\nmid |G_n(\phi,\alpha)|$ for any $n$. On the one hand, if $p^{N-1}$ does not divide $[K:\Q]$ for some $N\geq 2$ (and hence $\zeta_{p^N}\notin K$), then $p$ divides $ |\gal(K(\zeta_{p^N})/K)|=[K(\zeta_{p^N}):K]$. On the other hand, if $\zeta_{p^N}\in K_\infty(\phi,\alpha)$, then by Galois theory there is a surjection $G_\infty(\phi,\alpha)\to \gal(K(\zeta_{p^N})/K)$. This surjection must factor through some $G_n(\phi,\alpha)$, but this is impossible because $p\nmid |G_n(\phi,\alpha)|$.
\end{proof}

We are now ready to prove our last main result of this section.

\begin{theoremA}\label{suff_conditions}
Let $K$ be a number field, let $a,c\in K$ with $a\neq 0$ and let $\phi\coloneqq ax+a/x+c$. Let $\alpha\in K$, and suppose that there exists a prime $\p$ of $K$ lying above $2$ such that one of the following conditions hold:
\begin{enumerate}
\item $v_\p(a)=0$ and $v_\p(c)>0$;
\item $v_\p(a)=0$, $h(a)>\log 4$ and $c\equiv 1\bmod \p$.
\end{enumerate}
Then $[K^c_\infty(\phi,\alpha):K]<\infty$.
\end{theoremA}

\begin{proof}
Suppose by contradiction that $[K^c_\infty(\phi,\alpha):K]=\infty$. By Theorem~\ref{finite_ramification}, the extension $K^c_\infty(\phi,\alpha)/K$ is ramified at a finite set $S$ of primes. Moreover, by Lemma~\ref{lem:roots of 1} if $p$ is a rational odd prime lying below a prime of $S$ then there exists a non-negative integer $N_p$ such that $K^c_\infty(\phi,\alpha)\cap K(\zeta_{p^{\infty}})=K(\zeta_{p^{N_p}})$. Notice that since $S$ is finite, the set of $N_p$'s is finite as well. Hence up to adding a finite number of roots of unity to $K$ we can assume that:
$$K^c_\infty(\phi,\alpha)=K(\zeta_{2^{\infty}}).$$


By Abhyankar's Lemma there exists a large enough positive integer $\ell$ with the following properties:
\begin{itemize}
\item $\zeta_{2^\ell}\notin K$;
\item if $n\geq \ell$ and $\mathfrak q$ is a prime of $K(\zeta_{2^n})$ lying above $2$, then $\mathfrak q$ ramifies in $K(\zeta_{2^{n+1}})$.
\end{itemize}
From now on, if $\beta\in \phi^{-\infty}(\alpha)$ we will write $\beta_n$ to denote $\phi^n(\beta)$. Let $N\ge\ell+1$. Since $K^c_\infty(\phi,\alpha)=K(\zeta_{2^{\infty}})$, there must be some $\beta\in \phi^{-\infty}(\alpha)$ such that $K(\beta)=K(\zeta_{2^{N+1}})$ and $K(\beta_1)=K(\zeta_{2^N})$.

Let us show that if $(1)$ or $(2)$ hold and $\P$ is a prime of $K(\beta_1)$ lying above $\p$, then we must have $v_\P(\beta_1)=0$. Suppose by contradiction that $v_\P(\beta_1)\neq 0$. Our assumptions on $a,c$ clearly imply that $v_\P(\beta_n)<0$ for every $n\geq 2$, and hence $v_\P(\beta_n)\neq 0$ for every $n\geq 1$. Now let $m\geq 1$ be such that  $K(\beta_m)=K(\zeta_{2^N})$ and $K(\beta_{m+1})=K(\zeta_{2^{N-1}})$. Then the minimal polynomial of $\beta_m$ over  $K(\beta_{m+1})$ is $x^2+\frac{c-\beta_{m+1}}{a}x+1$, and the choice of $N$ ensures that the prime $\P'$ of $K(\beta_{m+1})$ lying below $\P$ ramifies in $K(\beta_m)$. Now on the one hand $\beta_m\cdot \sigma(\beta_m)=1$, where $\sigma$ generates $\gal(K(\beta_m)/K(\beta_{m+1}))\cong C_2$, and on the other hand $\beta_m$ and $\sigma(\beta_m)$ must have the same valuation at $\P$, since this is the only prime lying above $\P'$. This contradicts the fact that $v_\P(\beta_m)\neq 0$.

Let then $\P$ be a prime of $K(\beta_1)$ lying above $\p$ and assume that (1) or (2) hold, so that $v_\P(\beta_1)=0$. Since $K(\beta)=K(\zeta_{2^{N+1}})$ and $K(\beta_1)=K(\zeta_{2^N})$, it follows that the minimal polynomial of $\beta$ over $K(\beta_1)$ is $x^2+\frac{c-\beta_1}{a}x+1$, and its discriminant is $\Delta=\left(\frac{c-\beta_1}{a}\right)^2-4$. Since $\P$ ramifies in $K(\beta)$ it must be that $v_\P(\Delta)\neq 0$.

If (1) holds, we cannot have $v_\P(\Delta)\neq 0$, since $v_\P(a)=v_\P(\beta_1)=0$ while $v_\P(4),v_\P(c)>0$.

If (2) holds, in order to have $v_\P(\Delta)\neq 0$ we must have $c\equiv \beta_1\equiv 1\bmod \P$. Let us show that the pair $(\phi,\beta_1)$ satisfies the hypotheses of Theorem~\ref{ev_stability_sufficient} with respect to the valuation induced by $\P$. First, notice that the choice of $N$ implies that it cannot be $\phi(\beta_1)=\beta_1$, as otherwise we would have $\beta_n=\beta_1$ for every $n\geq 1$ and consequently $\zeta_{2^N}\in K$. Next, denote by $\overline{\cdot}$ the reduction modulo $\P$, and consider the reduced map $\overline{\phi}$. This has degree $2$, meaning that $\phi$ has good reduction at $\P$. Now look at the equation $\overline{\phi}(x)=\overline{\beta}_1$: this becomes $x^2+1=0$, which has only $1=\overline{\beta}_1$ as a root. Hence the conditions of Theorem~\ref{ev_stability_sufficient} are fulfilled and $(\phi,\beta_1)$ is eventually stable over $K(\beta_1)$. Corollary~\ref{abelian_implies_pcf} implies then that $\phi$ is PCF. To conclude, notice that $\infty$ is a fixed point for $\phi$. A quick computation shows that its multiplier is $1/a$. Now~\cite[Corollary 1.3]{benedetto2} yields a contradiction, as it shows that multipliers of fixed points of PCF quadratic maps have height bounded above by $\log 4$.
 \end{proof}

\section{Abelian points for Latt\`es maps}\label{lattes}

In this section, we will give necessary and sufficient conditions on a triple $(K,\phi,\alpha)$, where $K$ is a number field, $\phi\in K(x)$ is a $K$-Latt\`es map and $\alpha\in K$, for $G_\infty(\phi,\alpha)$ to be abelian (or, equivalently, for $\phi^{-\infty}(\alpha)$ to only consist of abelian points). Notice that Latt\`es maps are never $\overline{K}$-conjugate to a special map (see~\cite[Exercise~6.8]{silverman2}).

Throughout the section, we will use the following standard notation: for an elliptic curve $E$ over a field $K$ we denote by $O_E$ the neutral element for the group structure on $E$; if $m\geq 1$ is an integer we denote by $E[m]$ the subgroup of $m$-torsion points of $E$ and $E(L)_{tor}$ the subgroup of $L$-rational torsion points, for any extension $L/K$.

Recall that for a number field $K$, a $K$-Latt\`es map is a rational function $\phi\in K(x)$ such that there exist an elliptic curve $E/K$, a rational point $T\in E(K)$, an endomorphism $\vartheta\in \en_K(E)$ and a non-constant map $\pi\colon E\to \mathbb P^1$ such that the following diagram commutes:
\begin{equation}\label{lattes_diagram}
\xymatrix{
    E \ar[r]^{\psi} \ar[d]_{\pi} & E \ar[d]^{\pi} \\
    \mathbb P^1 \ar[r]^{\phi}       & \mathbb P^1 },
    \end{equation}
    where $\psi(Q)=\vartheta(Q)+T$ for every $Q\in E$. It is possible to prove (see~\cite[Proposition~6.77]{silverman2}) that $T$ must be a torsion point of $E$.
    
When $E$ is in Weierstrass form, we can assume without loss of generality that $\pi$ is one of the following maps:
$$\pi(x,y)=\begin{cases}x & j(E)\mbox{ arbitrary}\\
x^2 & j(E)=1728\\
y & j(E)=0\\
x^3 & j(E)=0\\
			\end{cases}.$$
			
Notice that changing the Weierstrass model of $E$ has the effect of conjugating the map $\phi$, and the conjugation takes place over $K$. This is a simple consequence of the fact that all maps involved are defined over $K$. For example, assume that $\pi=x$. A change of coodinates to a Weierstrass model $E'$ has the form $F\colon (x,y)\mapsto (u^2x+r,u^3y+u^2sx+t)$, with $r,s,t,u\in K$ and $u\ne 0$. Letting $f(x)=u^2x+r\in K(x)$ one gets the following diagram, whose squares all commute:
$$\xymatrix{
    E' \ar[r]^{F^{-1}} \ar[d]_{x} & E \ar[r]^{\psi} \ar[d]^{x} & E \ar[d]^{x} \ar[r]^{F} & E'\ar[d]^{x} \\
    \mathbb P^1 \ar[r]^{f^{-1}}& \mathbb P^1 \ar[r]^{\phi}       & \mathbb P^1\ar[r]^{f} & \mathbb P^1 }.$$

We can therefore assume that $E$ is given by an equation in short Weierstrass form. Finally, the preperiodic points of $\phi$ are exactly those of the form $\pi(P)$, where $P\in E(\overline{K})_{tor}$. For proofs of these assertions, see~\cite[Chapter~6]{silverman2}.

The main result of this section is the following theorem.

\begin{theoremA}\label{lattes_abelian}
Let $K$ be a number field, let $\phi\in K(x)$ be a $K$-Latt\`es map of degree $d\geq 2$ associated to an elliptic curve $E$ and let $\alpha\in K$. Then $\phi^{-\infty}(\alpha)\subseteq K^{\ab}$ if and only if the following three conditions hold:
\begin{enumerate}
\item $\alpha$ is preperiodic for $\phi$;
\item $E$ has CM by an order in a quadratic number field $F$;
\item the field $F=\text{End}(E)\otimes \Q$ is contained in $K$.
\end{enumerate}
\end{theoremA}

In order to prove Theorem \ref{lattes_abelian}, we first introduce a number of preliminary results. The first one is the following elementary lemma.

\begin{lemma}\label{isogenies_shape}
Let $E\colon y^2=x^3+Ax+B$ be an elliptic curve over a number field $K$. Let $\vartheta\colon E\to E$ be an isogeny defined over $K$. Then $\vartheta$ can be written in the following form:

$$\vartheta(x,y)=\begin{cases}(f_1(x),y\cdot f_2(x)) & j(E) \mbox{ arbitrary}\\
(x\cdot f_1(x^2),y\cdot f_2(x^2)) & j(E)=1728\\
(x\cdot f_1(x^3),y\cdot f_2(x^3) & j(E)=0\\
(x\cdot f_1(y),f_2(y)) & j(E)=0\\
			\end{cases},$$
for some $f_1,f_2$ rational functions in one variable with coefficients in $K$.
\end{lemma}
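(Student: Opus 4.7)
The strategy is to exploit that $\vartheta$ commutes with every element of the geometric automorphism group $\aut_{\ov\K}(E)$, which has order $2$ in general, $4$ if $j(E)=1728$, and $6$ if $j(E)=0$. The extra automorphisms beyond $[\pm 1]$ are rational over $K(i)$ or $K(\zeta_3)$ rather than over $K$, but this is harmless: whenever they exist, the curve has complex multiplication, so $\en_{\ov\K}(E)$ is the commutative ring $\Z[i]$ or $\Z[\zeta_3]$, and the $K$-isogeny $\vartheta$ commutes with every automorphism regardless of its field of definition.

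First I would fix a canonical form for both coordinates of $\vartheta$. Using $y^2=x^3+Ax+B$, the function field $K(E)$ splits as $K(x)\oplus y\cdot K(x)$, so each coordinate of $\vartheta$ admits a unique expression of the form $P(x)+yQ(x)$ with $P,Q\in K(x)$. Writing $[-1]\circ\vartheta=\vartheta\circ[-1]$ with $[-1]\colon(x,y)\mapsto(x,-y)$ forces the first coordinate to be $y\to-y$ invariant, hence of the form $f_1(x)$, and the second to be $y\to-y$ antiinvariant, hence of the form $yf_2(x)$, with $f_1,f_2\in K(x)$. This settles the generic case.

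For $j(E)=1728$ one has $B=0$ and the extra automorphism $[i]\colon(x,y)\mapsto(-x,iy)$. Equating $\vartheta\circ[i]$ with $[i]\circ\vartheta$ on the canonical form yields $f_1(-x)=-f_1(x)$ and $f_2(-x)=f_2(x)$, and the parity decomposition $K(x)=K(x^2)\oplus xK(x^2)$ gives $f_1(x)=xg_1(x^2)$ and $f_2(x)=g_2(x^2)$ with $g_1,g_2\in K(x)$. For $j(E)=0$ one has $A=0$ and the extra automorphism $[\zeta_3]\colon(x,y)\mapsto(\zeta_3 x,y)$, and commutation gives $f_1(\zeta_3 x)=\zeta_3 f_1(x)$ and $f_2(\zeta_3 x)=f_2(x)$. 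The isotypic decomposition $K(x)=K(x^3)\oplus xK(x^3)\oplus x^2K(x^3)$ under $x\mapsto\zeta_3 x$ then forces $f_1\in xK(x^3)$ (the $\zeta_3$-eigenspace) and $f_2\in K(x^3)$ (the invariants), yielding $f_1(x)=xg_1(x^3)$ and $f_2(x)=g_2(x^3)$. The alternative shape $(xf_1(y),f_2(y))$ follows from the identity $x^3=y^2-B$ on $E$, which converts any rational function of $x^3$ into one of $y$.

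The only subtle point is justifying the commutations in the $j=1728$ and $j=0$ cases, where the relevant extra automorphism is not $K$-rational; this is exactly where the commutativity of the full geometric endomorphism ring (forced by CM) is used. Everything else amounts to a direct inspection of parities, or of isotypic pieces under a cyclic action, on rational functions in a single variable, so no serious obstacle is expected.
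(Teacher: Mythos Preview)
Your argument is correct and follows essentially the same route as the paper's proof: write each coordinate as $P(x)+yQ(x)$, use commutation with $[-1]$ to obtain the shape $(f_1(x),yf_2(x))$, then use commutation with the extra automorphism $(x,y)\mapsto(-x,iy)$ or $(x,y)\mapsto(\zeta_3 x,y)$ to refine further, and finally convert via $x^3=y^2-B$. Your explicit justification that $\vartheta$ commutes with the non-$K$-rational automorphisms, via commutativity of the geometric endomorphism ring, is a point the paper leaves to a reference.
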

\begin{proof}
The proof simply uses the fact that $\vartheta$ commutes with automorphisms of $E$. To prove the first line, write $\vartheta(x,y)=(g_1(x)+yg_2(x),h_1(x)+yh_2(x))$ for some $g_1,g_2,h_1,h_2\in K(x)$. Since $\vartheta(-P)=-\vartheta(P)$ for every $P\in E(\overline{K})$, it follows immediately that $g_2(x)=h_1(x)=0$.

Now using the fact that we can write $\vartheta(x,y)=(f_1(x),y\cdot f_2(x))$, the second and the third lines follow by the fact that $\vartheta\circ \sigma=\sigma\circ\vartheta$, where $\sigma$ is the automorphism of $E$ that sends $(x,y)\mapsto (-x,iy)$ and $(x,y)\mapsto (\zeta_3x,y)$ in the second and third case, respectively (see for example~\cite[Proposition~6.26]{silverman2}). Here $i$ is a primitive $4$-th root of unity and $\zeta_3$ a primitive $3$-rd root of unity.

The last line follows from the third because when $j(E)=0$ then $x^3=y^2-B$.
\end{proof}

The next preliminary result follows from the results of~\cite{lozano}. Hence we will now recall the setting and the notation of the aforementioned paper.

Let $F$ be an imaginary quadratic number field with discriminant $\Delta_F$ and ring of integers $\O_F$, let $f\geq 1$ be an integer and $\O_{F,f}$ be the associated order, namely the subring $\Z f\tau+\Z$. Let $\tau\coloneqq \frac{\Delta_F+\sqrt{\Delta_F}}{2}$. The pair $\{f\tau,1\}$ is a $\Z$-basis of $\O_{F,f}$. We set $\varphi\coloneqq \Delta_Ff$ and $\delta\coloneqq \frac{\Delta_F(1-\Delta_F)}{4}f^2$. We denote by $j_{F,f}$ the $j$-invariant of the complex lattice $\C/\O_{F,f}$.

For every integer $N\geq 3$ we define the following Cartan subgroup of $\GL_2(\Z/N\Z)$:
$$\mathcal C_{\delta,\varphi}(N)\coloneqq \left\{\left(\begin{array}{cc}a+b\varphi & b\\ \delta b & a\end{array}\right): a,b\in \Z/N\Z, a^2+ab\varphi-\delta b^2\in (\Z/N\Z)^{\times}\right\}$$
and its normalizer
$$\mathcal N_{\delta,\varphi}(N)\coloneqq \left\langle \mathcal C_{\delta,\varphi}(N),\left(\begin{array}{cc}-1 & 0 \\ \varphi & 1\end{array}\right)\right\rangle.$$

Moreover, we let $\mathcal C_{\delta,\varphi}(p^\infty)$ and $\mathcal N_{\delta,\varphi}(p^{\infty})$ be the inverse limits of $\mathcal C_{\delta,\varphi}(p^n)$ and $\mathcal N_{\delta,\varphi}(p^n)$, respectively.

For a prime $p$ and an elliptic curve $E$ over a number field $K$, we denote by $T_p(E)$ the $p$-adic Tate module of $E$ and, once a $\Z_p$-basis of $T_p(E)$ is chosen, we denote by $\rho_{E,p}\colon G_K\to \GL_2(\Z_p)$ the associated $p$-adic Galois representation.

\begin{theorem}[{{\cite[Theorem~1.2]{lozano}}}]\label{cm_images}
With the notation above, let $E/\Q(j_{F,f})$ be an elliptic curve with CM by $\O_{F,f}$ and let $p$ be a prime number. Then there exists a basis of $T_p(E)$ such that the image of the Galois representation $\rho_{E,p}\colon G_{\Q(j_{F,f})}\to \GL_2(\Z_p)$ is contained in $\mathcal N_{\delta,\varphi}(p^\infty)$, and has finite index therein. Moreover, if $\sigma\in G_{\Q(j_{F,f})}$ generates $\gal(F(j_{F,f})/\Q(j_{F,f}))$, then $\rho_{\Q(j_{F,f})}(\sigma)\notin \mathcal C_{\delta,\varphi}(p^\infty)$.
\end{theorem}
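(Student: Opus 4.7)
The plan is to derive this from the classical description of Galois images for CM elliptic curves, due in various parts to Deuring, Shimura and Serre--Tate. The first step is the concrete identification of the Cartan $\mathcal C_{\delta,\varphi}(p^\infty)$ with the image of $(\O_{F,f}\otimes\Z_p)^{\times}$ acting on itself by multiplication in the $\Z_p$-basis $\{f\tau,\,1\}$. A direct calculation using $\tau^2 = \Delta_F\tau + \tfrac{\Delta_F(1-\Delta_F)}{4}$ gives $(f\tau)^2 = \varphi\cdot(f\tau)+\delta$, so in this basis the matrix of multiplication by $a+b\,f\tau$ is exactly $\bigl(\begin{smallmatrix}a+b\varphi & b\\ b\delta & a\end{smallmatrix}\bigr)$, whose determinant is $a^2+ab\varphi - b^2\delta$; this matches the definition of the Cartan on the nose. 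Complex conjugation on $F$ sends $f\tau\mapsto \varphi-f\tau$ and $1\mapsto 1$, whose matrix in the same basis is precisely the extra generator $\bigl(\begin{smallmatrix}-1&0\\ \varphi&1\end{smallmatrix}\bigr)$ of $\mathcal N_{\delta,\varphi}(p^\infty)$.

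The second step is to fix an $\O_{F,f}\otimes\Z_p$-module isomorphism $T_p(E)\cong \O_{F,f}\otimes\Z_p$; such an isomorphism exists because $T_p(E)$ is free of rank one over the completed CM order, which one checks by combining the analytic uniformization (working after a base change to $\C$) with a descent argument, or equivalently by verifying that $E[p^n]$ is cyclic over $\O_{F,f}/p^n\O_{F,f}$ for every $n$. Via the $\Z_p$-basis $\{f\tau\otimes 1,\,1\otimes 1\}$, this identifies $T_p(E)$ with $\Z_p^2$ so that the $\O_{F,f}$-action corresponds to the Cartan subgroup computed above. Since every endomorphism of $E$ is defined over $F(j_{F,f})$, for any $g\in G_{F(j_{F,f})}$ the matrix $\rho_{E,p}(g)$ commutes with the action of $\O_{F,f}\otimes\Z_p$. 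But the commutant in $M_2(\Z_p)$ of a maximal commutative $\Z_p$-subalgebra equals the subalgebra itself, so $\rho_{E,p}(g)\in \mathcal C_{\delta,\varphi}(p^\infty)$.

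The third step upgrades this to $G_{\Q(j_{F,f})}$. Since $[F(j_{F,f}):\Q(j_{F,f})]=2$ with non-trivial element given by complex conjugation $c$, an element $g\in G_{\Q(j_{F,f})}$ restricting non-trivially to $F$ acts on $\en(E)=\O_{F,f}$ by $\vartheta\mapsto\bar\vartheta$, since Galois permutes endomorphisms compatibly with its action on their field of definition and $c$ acts on $F$ as complex conjugation. Hence conjugation by $\rho_{E,p}(g)$ realizes the involution $\vartheta\mapsto\bar\vartheta$ on the Cartan, which the first step shows is precisely conjugation by $\bigl(\begin{smallmatrix}-1&0\\ \varphi&1\end{smallmatrix}\bigr)$. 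Therefore $\rho_{E,p}(g)$ lies in the non-trivial coset of $\mathcal C_{\delta,\varphi}(p^\infty)$ in $\mathcal N_{\delta,\varphi}(p^\infty)$, proving containment in the normalizer; in particular, $\rho_{E,p}(\sigma)\notin \mathcal C_{\delta,\varphi}(p^\infty)$ for $\sigma$ generating $\gal(F(j_{F,f})/\Q(j_{F,f}))$, because such a $\sigma$ does not commute with the CM action.

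Finally, the finite-index assertion follows from the classical open image theorem for CM elliptic curves (Serre--Tate), which gives that the image of $\rho_{E,p}$ in the commutant of $\O_{F,f}\otimes\Z_p$ is open, hence of finite index in $\mathcal N_{\delta,\varphi}(p^\infty)$. I expect the main obstacle to be not in the structural arguments, which are standard CM theory, but in the bookkeeping of the first step: matching the coset representative of the normalizer, the Cartan, and the discriminant-theoretic quantities $\delta,\varphi$ exactly with the conventions of the statement requires a careful, coordinate-dependent verification of the change of basis, and is precisely what distinguishes this explicit form of the theorem from its more abstract formulations.
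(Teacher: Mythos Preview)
The paper does not give its own proof of this theorem: it is quoted verbatim as \cite[Theorem~1.2]{lozano} and used as a black box. So there is no ``paper's proof'' to compare against; your proposal should be read as an independent sketch of Lozano-Robledo's result.

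As such a sketch, your outline is correct and is essentially the classical CM argument that underlies Lozano-Robledo's explicit version. Your computation in the first step is accurate: from $\tau^2=\Delta_F\tau+\tfrac{\Delta_F(1-\Delta_F)}{4}$ one indeed gets $(f\tau)^2=\varphi\cdot(f\tau)+\delta$, and the multiplication matrix and the conjugation matrix match the stated Cartan and its extra generator exactly. The commutant argument in step two is fine because $\O_{F,f}\otimes\Z_p$, being free of rank~$2$ over $\Z_p$, is a maximal commutative $\Z_p$-subalgebra of $M_2(\Z_p)$ and hence its own centralizer; this is what makes the passage ``$\rho_{E,p}(g)\varepsilon^{-1}$ centralizes the Cartan, hence lies in it'' go through in step three. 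The one place that is genuinely delicate and that you rightly flag is the freeness of $T_p(E)$ over $\O_{F,f}\otimes\Z_p$ when $p\mid f$: here $\O_{F,f}\otimes\Z_p$ is not the maximal order in $F\otimes\Q_p$, and the cyclicity of $E[p^n]$ over $\O_{F,f}/p^n\O_{F,f}$ is not automatic from abstract module theory; it follows from the analytic uniformization $E(\C)\cong\C/\O_{F,f}$, which identifies $E[p^n]$ with $p^{-n}\O_{F,f}/\O_{F,f}\cong\O_{F,f}/p^n\O_{F,f}$ compatibly with the $\O_{F,f}$-action. The finite-index statement is, as you say, the open image theorem for CM curves via the idelic main theorem of complex multiplication.
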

One can use Theorem \ref{cm_images} to prove the following result, that we will need later on.
\begin{lemma}\label{commutators}
Let $K$ be a number field and let $E/K$ be an elliptic curve with CM by an order $\O_{F,f}$ in a quadratic number field $F$. Let $p$ be a prime number and let $\rho_{E,p}\colon G_K\to \GL_2(\Z_p)$ be the Galois representation attached to the $p$-adic Tate module of $E$. If $F$ is not contained in $K$, then the commutator subgroup $[\rho_{E,p}(G_K),\rho_{E,p}(G_K)]$ contains elements of infinite order.
\end{lemma}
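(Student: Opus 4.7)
The plan is to use Theorem~\ref{cm_images} to pin down the image of $\rho_{E,p}$ inside the normalizer $\mathcal{N}_{\delta,\varphi}(p^\infty)$ and then to compute an explicit family of commutators of infinite order. Since $E/K$ has CM by $\O_{F,f}$, the $j$-invariant $j_{F,f}=j(E)$ lies in $K$, so $\Q(j_{F,f})\subseteq K$. Because $[K:\Q(j_{F,f})]<\infty$, the subgroup $G\coloneqq\rho_{E,p}(G_K)$ has finite index in $\rho_{E,p}(G_{\Q(j_{F,f})})$, and Theorem~\ref{cm_images} then makes $G$ a finite-index subgroup of $\mathcal{N}_{\delta,\varphi}(p^\infty)$. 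In particular $G\cap\mathcal{C}_{\delta,\varphi}(p^\infty)$ has finite index in $\mathcal{C}_{\delta,\varphi}(p^\infty)$.

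Next, I would produce an element $n\in G\setminus \mathcal{C}_{\delta,\varphi}(p^\infty)$. Since $\Q(j_{F,f})$ is totally real, the imaginary quadratic field $F$ is not contained in $\Q(j_{F,f})$, and so $[F(j_{F,f}):\Q(j_{F,f})]=2$; the hypothesis $F\not\subseteq K$ then forces $F(j_{F,f})\not\subseteq K$. Hence one can choose $\sigma\in G_K$ whose restriction to $F(j_{F,f})$ is the non-trivial element of $\gal(F(j_{F,f})/\Q(j_{F,f}))$, and the last assertion of Theorem~\ref{cm_images} gives $n\coloneqq\rho_{E,p}(\sigma)\notin \mathcal{C}_{\delta,\varphi}(p^\infty)$.

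The key input is the identification of $\mathcal{C}_{\delta,\varphi}(p^\infty)$ with $(\O_{F,f}\otimes\Z_p)^\times$ via the regular representation with respect to the $\Z_p$-basis $\{f\tau,1\}$: a short computation using $\tau^2=\Delta_F\tau-\Delta_F(\Delta_F-1)/4$ shows that the matrix associated to multiplication by $a+bf\tau$ is exactly $\begin{pmatrix}a+b\varphi & b\\ \delta b & a\end{pmatrix}$, while the extra generator $\begin{pmatrix}-1&0\\ \varphi & 1\end{pmatrix}$ of $\mathcal{N}_{\delta,\varphi}$ corresponds under the same identification to the complex conjugation $\gamma$ of $\O_{F,f}\otimes\Z_p$. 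Since $\mathcal{N}/\mathcal{C}$ has order~$2$, conjugation by $n$ acts on $\mathcal{C}_{\delta,\varphi}(p^\infty)$ precisely as $\gamma$, so for every $c\in G\cap\mathcal{C}_{\delta,\varphi}(p^\infty)$ one has $[n,c]=ncn^{-1}c^{-1}=\bar c\cdot c^{-1}$, which lies in $[\rho_{E,p}(G_K),\rho_{E,p}(G_K)]$.

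It then suffices to exhibit $c\in G\cap\mathcal{C}_{\delta,\varphi}(p^\infty)$ for which $\bar c c^{-1}$ has infinite order. The homomorphism $c\mapsto\bar c c^{-1}$ on $(\O_{F,f}\otimes\Z_p)^\times$ has kernel equal to the fixed subgroup $\Z_p^\times$; since $(\O_{F,f}\otimes\Z_p)^\times$ is a topologically finitely generated abelian group whose $\Z_p$-free part has rank $2$ while $\Z_p^\times$ has $\Z_p$-rank $1$, the image has positive $\Z_p$-rank and therefore contains elements of infinite order. Because $G\cap\mathcal{C}_{\delta,\varphi}(p^\infty)$ has finite index in $\mathcal{C}_{\delta,\varphi}(p^\infty)$, its image under $c\mapsto\bar c c^{-1}$ has finite index in the full image, so it still contains elements of infinite order, producing the commutator we need. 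The step I expect to be the most delicate is verifying cleanly that conjugation by a representative of $\mathcal{N}_{\delta,\varphi}/\mathcal{C}_{\delta,\varphi}$ realises complex conjugation on the regular representation of $\O_{F,f}\otimes\Z_p$; once that identification is fixed, the remainder of the argument is essentially a rank computation.
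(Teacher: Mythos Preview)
Your overall strategy matches the paper's: locate $G=\rho_{E,p}(G_K)$ as a finite-index subgroup of $\mathcal N_{\delta,\varphi}(p^\infty)$ not contained in the Cartan, observe that conjugation by any element of $\mathcal N\setminus\mathcal C$ acts on $\mathcal C$ as complex conjugation, and deduce that commutators of the form $[n,c]=\bar c\,c^{-1}$ are generically of infinite order. Your rank argument at the end is a clean replacement for the paper's counting/density argument (its claim~(**)).

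However, your first two paragraphs contain a genuine gap. Theorem~\ref{cm_images} is stated for an elliptic curve defined over $\Q(j_{F,f})$, whereas $E$ is only given over $K$. In general $E$ does \emph{not} descend to $\Q(j_{F,f})$; it is merely a twist, over $K$, of some $E'/\Q(j_{F,f})$ with $j(E')=j_{F,f}$. Consequently the symbol ``$\rho_{E,p}(G_{\Q(j_{F,f})})$'' is undefined, and you cannot invoke Theorem~\ref{cm_images} directly either to place $G$ inside $\mathcal N_{\delta,\varphi}(p^\infty)$ with finite index or, in your second paragraph, to conclude that $n=\rho_{E,p}(\sigma)\notin\mathcal C_{\delta,\varphi}(p^\infty)$. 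The paper handles this explicitly: it introduces $E'/\Q(j_{F,f})$, notes that $E$ and $E'_{/K}$ are twists via a cocycle $\chi\colon G_K\to\aut(E)$, passes to $L=\overline K^{\ker\chi}$ where the two $p$-adic representations agree, and then uses a normalizer computation (\cite[Lemma~5.1]{lozano}) together with the hypothesis $F\not\subseteq K$ to transfer the conclusion about the image from $E'$ to $E$. Once that reduction is supplied, your commutator identity and rank argument go through without change.
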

\begin{proof}
First, let us show that it is enough to prove the following statement:
\begin{center}
(*) if $G\leq \mathcal N_{\delta,\varphi}(p^\infty)$ is a finite index subgroup not contained in $\mathcal C_{\delta,\varphi}(p^\infty)$, then the commutator subgroup $[G,G]$ contains elements of infinite order.
\end{center}
In fact, assume that (*) holds true, let $j_{F,f}$ be the $j$-invariant of $E$, and let $E'/\Q(j_{F,f})$ be an elliptic curve with CM by $\O_{F,f}$ and $j$-invariant $j_{F,f}$. By hypothesis $K$ contains $\Q(j_{F,f})$, and of course $E$ and $E'/K$ are twists of each other via a $1$-cocycle $\chi\colon G_K\to \aut(E)$. 
Let $L\coloneqq \overline{K}^{\ker\chi}$; the curves $E/L$ and $E'/L$ are isomorphic, and the restrictions of their Galois representations to $G_L$ are isomorphic as well. By Theorem~\ref{cm_images}, up to changing the basis of $T_p(E)$ we can assume that $\rho_{E,p}$ and $\rho_{E',p}$ coincide as maps on $G_L$ and $H\coloneqq \rho_{E',p}(G_L)=\rho_{E,p}(G_L)$ is a finite index subgroup of $\mathcal N_{\delta,\varphi}$. Hence if $H$ is not contained in $\mathcal C_{\delta,\varphi}$ we can apply claim (*) with $G=H$ and conclude. If, on the other hand, $H\subseteq \mathcal C_{\delta,\varphi}$ then since $[\mathcal N_{\delta,\varphi}(p^\infty):H]<\infty$, the group $H$ must contain a non-diagonal element. Since $G_L$ is a normal subgroup of $G_K$, $\rho_{E,p}(G_K)$ is contained in the normalizer of $\rho_{E,p}(G_L)$ inside $\GL_2(\Z_p)$, and~\cite[Lemma~5.1]{lozano} shows that we must have $\rho_{E,p}(G_K)\subseteq \mathcal N_{\delta,\varphi}(p^{\infty})$. Notice that we cannot have $\rho_{E,p}(G_K)\subseteq \mathcal C_{\delta,\varphi}(p^\infty)$, as otherwise $\rho_{E,p}(G_K)$ would be abelian and hence $F\subseteq K$ by Theorem~\ref{cm_images}, contradicting the hypothesis. Hence in this case we can apply claim (*) with $G=\rho_{E,p}(G_K)$.

Hence we reduced to proving (*). From now on, we let $G$ be a finite index subgroup of $\mathcal N_{\delta,\varphi}$ not containing $\mathcal C_{\delta,\varphi}$. Notice that for every $n\geq 1$ there is an isomorphism of groups
$$M_n\colon (\O_{F,f}/p^n\O_{F,f})^{\times}\to \mathcal C_{\delta,\varphi}(p^n)$$
where $M_n(z)$ is the multiplication-by-$z$ matrix with respect to the $\Z$-basis $\{f\tau,1\}$ of $\O_{F,f}/p^n\O_{F,f}$ (see~\cite[Lemma~2.5]{lozano}). This of course yields an isomorphism $$M\colon\varprojlim_n(\O_{F,f}/p^n)^{\times}\to \mathcal C_{\delta,\varphi}(p^\infty),$$
and we will identify groups that are isomorphic via $M_n$ or $M$. Clearly there is an injection
$$\iota\colon U_p\coloneqq\{\alpha\in \O_{F,f}\colon p\nmid N_{F/\Q}(\alpha)\}\hookrightarrow\mathcal C_{\delta,\varphi}(p^\infty)$$
whose image is dense. Now let $\sigma$ be the generator of $\gal(F/\Q)$. We make the following claim:

\begin{center}
(**) there exist $\alpha\in U_p$ such that $\iota(\alpha)\in G$ and $\iota(\alpha)\iota(\sigma(\alpha))^{-1}$ has infinite order.
\end{center}

To prove it, notice that if $\alpha\in U_p$ is such that $\iota(\alpha)\iota(\sigma(\alpha))^{-1}$ has finite order $m\geq 1$, then $\alpha^m\equiv \sigma(\alpha)^m\bmod p^n$ for every $n\geq 1$; therefore it must be $\alpha=\zeta_m\sigma(\alpha)$ for some $m$-th root of unity $\zeta_m$, and in turn this implies $m=1,2,3,4,6$ since $F$ is imaginary quadratic. Now for a fixed $m\in \{1,2,3,4,6\}$ let $U_{p,m}$ denote the set of all $\alpha\in U_p$ such that $\alpha=\xi_m\sigma(\alpha)$ for some $m$-th root of unity $\xi_m$. Let $\pi_n$ denote the natural surjection $U_p\twoheadrightarrow (\O_{F,f}/p^n\O_{F,f})^{\times}$. One can easily verify that $|\pi_n(U_{p,m})|=o(|(\O_{F,f}/p^n\O_{F,f})^{\times}|)$\footnote{Here $o(\cdot)$ is the usual little-$o$ notation, namely for functions $f,g\colon\N\to \R$ we write $f=o(g)$ if $f/g\to 0$ when $n\to \infty$.} and hence
\begin{equation}\label{smallo}
\sum_{m\in\{1,2,3,4,6\}}|\pi_n(U_{p,m})|=o(|(\O_{F,f}/p^n\O_{F,f})^{\times}|).
\end{equation}
This boils down to computing an upper bound on $|\pi_n(U_{p,m})|$ and then comparing it to the value $|(\O_{F,f}/p^n\O_{F,f})^{\times}|$ given in~\cite[Lemma~2.5]{lozano}. Let us show how one proceeds in the $m=4$ case; the other ones are completely analogous. If $m=4$ it must be $F=\Q(i)$. Let $a+bf\tau\in U_{p,m}$. The conditions $\alpha=\zeta_m\sigma(\alpha)$ for some $\zeta_m\in \{\pm1,\pm i\}$ are equivalent to the four conditions
$$b=0,\, \,\,\, a=bf, \, \,\, a=3bf, \, \, \, a=4bf.$$
The set of matrices $\left(\begin{array}{cc}a+b\varphi & b\\ \delta b & a\end{array}\right)\in \mathcal C_{\delta,\varphi}(p^n)$ that respect one of these four conditions clearly has at most $4p^{n-1}(p-1)$ elements, and this value is $o(|(\O_{F,f}/p^n\O_{F,f})^{\times}|)$ by~\cite[Lemma~2.5]{lozano}. Repeating this argument for the remaining $m$'s yields a proof of~\eqref{smallo}. On the other hand, since $[\mathcal N_{\delta,\varphi}(p^\infty):G]<\infty$, then $[\mathcal C_{\delta,\varphi}(p^\infty): G\cap \mathcal C_{\delta,\varphi}(p^\infty)]<\infty$ and this means that the projection $G_n$ of $G\cap \mathcal C_{\delta,\varphi}(p^\infty)$ on $(\O_{F,f}/p^n\O_{F,f})^{\times}$ has eventually constant index in $(\O_{F,f}/p^n\O_{F,f})^{\times}$. Since for every $n$ the map $\pi_n$ is surjective, by~\eqref{smallo} for $n$ large enough there must be elements of $(\O_{F,f}/p^n\O_{F,f})^{\times}$ that lie in $G_n\setminus\pi_n\left(\bigcup_{m\in\{1,2,3,4,6\}} U_{p,m}\right)$. This shows that there are elements of $G\cap \mathcal C_{\delta,\varphi}(p^\infty)$ that do not belong to $\iota\left(\bigcup_{m\in\{1,2,3,4,6\}}U_{p,m}\right)$. Notice though that $\iota(U_{p,m})$ is closed for every $m$, and hence the intersection of $G\cap \mathcal C_{\delta,\varphi}(p^\infty)$ (that is a finite index closed subgroup of $\mathcal C_{\delta,\varphi}(p^\infty)$, and hence is open therein) and the complement of $\iota\left(\bigcup_{m\in\{1,2,3,4,6\}}U_{p,m}\right)$ is a nonempty open subset of $\mathcal C_{\delta,\varphi}(p^\infty)$, and hence by density it intersects $\iota(U_p)$, proving (**).

Now all that remains to do is to prove claim (*). Let $\varepsilon\coloneqq \left(\begin{array}{cc}-1 & 0\\ \varphi & 1\end{array}\right)$, and notice that since $[\mathcal N_{\delta,\varphi}(p^\infty):\mathcal C_{\delta,\varphi}(p^\infty)]=2$ and by hypothesis $G\not\subseteq \mathcal C_{\delta,\varphi}(p^\infty)$ , there is an element $g\in G$ that lies in the same coset of $\varepsilon$ with respect to $\mathcal C_{\delta,\varphi}(p^\infty)$, and hence there is some $\beta\in \mathcal C_{\delta,\varphi}(p^\infty)$ such that $\beta\varepsilon \in G$. Now let $\alpha\in U_p$ be an element as in (**). Then 
$$[\iota(\alpha),\beta\varepsilon]=\iota(\alpha)\cdot\beta\cdot(\varepsilon \cdot \iota(\alpha)\cdot \varepsilon)^{-1}\cdot \beta^{-1}=\iota(\alpha)\iota(\sigma(\alpha))^{-1},$$
by the commutativity of $\mathcal C_{\delta,\varphi}(p^\infty)$ and the fact that $\varepsilon\cdot \iota(\alpha)\cdot \varepsilon=\iota(\sigma(\alpha))$. By (**) it follows that the commutator $[\iota(\alpha),\beta\varepsilon]$ has infinite order, and the proof of (*) is complete.
\end{proof}


Next, we state and prove a technical lemma that allows us to reduce to the study of $K$-Latt\`es maps with $T=O_E$, i.e.\ $K$-Latt\`es maps coming from endomorphisms of elliptic curves. Let then $K$ be a number field, $\phi\in K(x)$ be a $K$-Latt\`es map of degree $d\geq 2$, and let $E/K$ be an elliptic curve, $\vartheta\in \en_K(E)$, $T\in E(K)_{tor}$ and $\pi\colon E\to \mathbb P^1$ be the data that define diagram~\eqref{lattes_diagram}, so that $\psi(Q)=\vartheta(Q)+T$ for every $Q\in E$. Of course $\vartheta$ also has degree $d$.

For any morphism $\mu\colon E\to E$ of degree $\ge 2$ and any $Q\in E$, we will denote by $T_\infty(\mu,Q)$ the infinite regular rooted tree of degree $\deg\mu$ defined in the obvious way: the root of the tree is $Q$, the points at distance $n$ from the root are the elements of $\mu^{-n}(Q)=\{P\in E\colon \mu^n(P)=Q\}$. The \emph{level} of a point $P\in T_\infty(\mu,Q)$ is the distance of $P$ from the root $Q$. A node $P$ at level $n+1$ is connected to a node $P'$ at level $n$ if $\mu(P)=P'$ (notice that such tree is always regular since $\mu$ is unramified).

Our map $\psi$ as above defines, toghether with a point $P\in E$, a tree $T_\infty(\psi,P)$ of degree $d=\deg\psi=\deg \vartheta\geq 2$ which surjects, via $\pi$, to $\phi^{-\infty}(\pi(P))$.

Now let $m\geq 1$ be an integer such that $mT=O_E$. We let $m\cdot T_\infty(\psi,P)$ be the tree constructed in the following way: first we multiply by $m$ every node of $T_\infty(\psi,P)$,  then at every level we identify equal nodes and finally we identify edges with the same ends, so that between two nodes in $m\cdot T_\infty(\psi,P)$ there is always at most one edge. Clearly there is a level preserving surjection $[m]\colon T_\infty(\psi,P)\twoheadrightarrow m\cdot T_\infty(\psi,P)$. Moreover, if $Q,Q'\in m\cdot T_\infty(\psi,P)$ are nodes at level $n,n-1$, respectively, then one sees easily that they are connected by an edge if and only if $\vartheta(Q)=Q'$. However, notice that the tree $m\cdot T_\infty(\psi,P)$ in general is only a proper subtree of $T_\infty(\vartheta,mP)$, since multiplication by $m$ might have the effect of collapsing points on the same level, whenever $(m,d)\neq 1$. 

\begin{lemma}\label{endomorphism_reduction}
Let $\phi,E,\vartheta,\pi,m$ be as above and let $P\in E(K)$. Then the following hold.
\begin{enumerate}[(a)]
\item There exists an integer $N\geq 1$ such that if $P'\in T_\infty(\psi,P)$ has level $N$, then every node of the tree $T_\infty(\vartheta^N,mP')$ belongs to $m\cdot T_\infty(\psi,P)$. Moreover, $P$ is torsion if and only if $mP'$ is torsion.
\item If $\pi(T_\infty(\psi,P))\subseteq K^{\ab}$ then $\pi(m\cdot T_\infty(\psi,P))\subseteq K^{\ab}$ and the extension $K(m\cdot T_\infty(\psi,P))/(K(m\cdot T_\infty(\psi,P))\cap K^{\ab})$ has finite exponent.
\end{enumerate}
 
\end{lemma}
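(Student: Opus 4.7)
My plan is the following. The key fact underlying both parts is that multiplication by $m$ semi-conjugates $\psi$ to $\vartheta$: since $\psi(Q)=\vartheta(Q)+T$ and $mT=O_E$, one has $[m]\circ\psi=\vartheta\circ [m]$; iterating, if we set $T_n\coloneqq (1+\vartheta+\cdots+\vartheta^{n-1})(T)\in E[m]$, then $\psi^n(Q)=\vartheta^n(Q)+T_n$, and in particular $\vartheta^N(mP')=m\vartheta^N(P')=m(P-T_N)=mP$ for every $P'\in\psi^{-N}(P)$.

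For part (a), I would choose $N\geq 1$ large enough that the descending chain $\vartheta^j(E[m])$ of subgroups of the finite group $E[m]$ has stabilized, i.e.\ $\vartheta^j(E[m])=\vartheta^N(E[m])=:F$ for every $j\geq N$. Given a level-$n$ node $Q\in\vartheta^{-Nn}(mP')$ of $T_\infty(\vartheta^N,mP')$, pick any $Q_0'\in[m]^{-1}(Q)$; the relation $m\vartheta^{Nn}(Q_0')=\vartheta^{Nn}(Q)=mP'$ gives $\vartheta^{Nn}(Q_0')=P'+s_1$ for some $s_1\in E[m]$, and the telescoping identity $T_{Nn+N}-T_N=\vartheta^N(T_{Nn})$ yields
$$\vartheta^{Nn+N}(Q_0')-(P-T_{Nn+N})=\vartheta^N(T_{Nn}+s_1)\in\vartheta^N(E[m])=F=\vartheta^{Nn+N}(E[m]).$$
Hence one can choose $s\in E[m]$ with $\vartheta^{Nn+N}(s)=P-T_{Nn+N}-\vartheta^{Nn+N}(Q_0')$, and then $Q'\coloneqq Q_0'+s$ satisfies $mQ'=Q$ and $\psi^{Nn+N}(Q')=P$, giving $Q\in m\cdot\psi^{-(Nn+N)}(P)\subseteq m\cdot T_\infty(\psi,P)$. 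The torsion assertion follows because $\vartheta^N(mP')=mP$ and $\ker\vartheta^N$ is finite, so $mP'$ is torsion iff $mP$ is, which is iff $P$ is.

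For part (b), applying Lemma~\ref{isogenies_shape} to the isogeny $[m]\in\en_K(E)$, I would verify case-by-case that $\pi\circ[m]$ factors through $\pi$ as a $K$-rational function; therefore $\pi(mQ)\in K(\pi(Q))$ for every $Q$, and in particular $\pi(Q)\in K^{\ab}$ implies $\pi(mQ)\in K^{\ab}$, giving the inclusion $\pi(m\cdot T_\infty(\psi,P))\subseteq K^{\ab}$. For the finite-exponent claim, set $L\coloneqq K(m\cdot T_\infty(\psi,P))$ and $L_0\coloneqq L\cap K^{\ab}$: any $\sigma\in\gal(\overline K/L_0)$ fixes $\pi(mQ)$, so $\sigma(mQ)$ lies in the finite fiber $\pi^{-1}(\pi(mQ))$, whose cardinality is at most $\deg\pi\leq 6$. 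Hence $\sigma$ permutes this finite set and a bounded iterate $\sigma^e$ fixes every node $mQ$, so $\gal(L/L_0)$ has exponent dividing a fixed integer $e$ depending only on $\deg\pi$.

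The main obstacle lies in the algebraic bookkeeping of part (a): one must choose $N$ large enough that the descending chain $\vartheta^j(E[m])$ has stabilized, and then verify the crucial membership $\vartheta^{Nn+N}(Q_0')-(P-T_{Nn+N})\in\vartheta^{Nn+N}(E[m])$ for every $n\geq 0$; the telescoping identity $T_{Nn+N}-T_N=\vartheta^N(T_{Nn})$, combined with $mT_k=0$, is what makes the required divisibility work out.
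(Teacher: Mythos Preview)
Your proposal is correct. Part (b) matches the paper's argument almost exactly: both use Lemma~\ref{isogenies_shape} to see that $\pi\circ[m]$ is a $K$-rational function of $\pi$, and both bound the non-abelian part by the degree of $\pi$ (the paper phrases this as $[K(mQ):K(mQ)\cap K^{\ab}]\le 12$ via explicit coordinates, you phrase it as a bound on the order of $\sigma$ acting on the fibre $\pi^{-1}(\pi(mQ))$; these are equivalent).

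Part (a), however, takes a genuinely different route. The paper stabilizes the \emph{ascending} chain $\ker\vartheta^n\cap E[m]$, fixes a level-$n$ point $Q$ with $n\ge N$, observes that the whole coset $\{Q+R_i:R_i\in\ker\vartheta^N\cap E[m]\}$ sits at the same level of $T_\infty(\psi,P)$, and then runs a counting/disjointness argument on the $[m]$-images of the $\psi^{-N}$-fibres over the $Q+R_i$ to conclude that all $d^N$ preimages of $mQ$ under $\vartheta^N$ appear in $m\cdot T_\infty(\psi,P)$. You instead stabilize the \emph{descending} chain $\vartheta^j(E[m])$ and, for a given node $Q$ of $T_\infty(\vartheta^N,mP')$, directly correct an arbitrary $[m]$-lift $Q_0'$ by an element $s\in E[m]$ so that $Q'=Q_0'+s$ lands in $\psi^{-(Nn+N)}(P)$; the telescoping identity $T_{Nn+N}-T_N=\vartheta^N(T_{Nn})$ is exactly what forces the required obstruction $P-T_{Nn+N}-\vartheta^{Nn+N}(Q_0')$ into $\vartheta^N(E[m])=\vartheta^{Nn+N}(E[m])$. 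Your construction is shorter and avoids the somewhat delicate disjointness bookkeeping in the paper's proof; the paper's argument, on the other hand, tracks the level structure more explicitly, which makes the subtree picture a bit more transparent. Either stabilization suffices since $E[m]$ is finite.
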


\begin{proof}
(a) Let $N\geq 1$ be an integer such that $\ker \vartheta^n\cap E[m]=\ker\vartheta^N\cap E[m]$ for every $n\geq N$, let $\ker \vartheta^N\cap E[m]=\{R_1,\ldots,R_k\}$ and let $Q\in T_\infty(\psi,P)$ be a point at some level $n\geq N$. We claim that all the elements of $(\vartheta^N)^{-1}(mQ)$ belong to $m\cdot T_\infty(\psi,P)$. To see this, first notice that since $Q$ is at distance $n\geq N$ from the root, then for every $i\in \{1,\ldots,k\}$ also $Q+R_i$ is a node of $T_\infty(\psi,P)$ at distance $n$ from the root. Clearly $m(Q+R_i)=mQ$ for every $i\in \{1,\ldots,k\}$, which implies that all the $Q+R_i$'s collapse to one single point in $m\cdot T_\infty(\psi,mP)$. Now let $Q',S_1,\ldots,S_k\in E$ be such that $\psi^N(Q')=Q$ and $\vartheta^N(S_i)=R_i$ for every $i$. Then for every $i$ we have that $\psi^{-N}(Q+R_i)=\{Q'+S_i+U\colon U\in \ker\vartheta^N\}$, and clearly $[m](\psi^{-N}(Q+R_i))$ has $d^N/k$ distinct elements. Hence if we prove that for every $i\neq j$ we have $[m](\psi^{-n}(Q+R_i))\cap [m](\psi^{-n}(Q+R_j))=\varnothing$, we will have proved that $mQ$ has $k\cdot d^N/k=d^N$ distinct preimages via $\vartheta^N$ in $m\cdot T_\infty(\psi,mP)$, i.e.\ our claim. So suppose by contradiction that there exist $U_1,U_2\in \ker\vartheta^N$ such that $[m](Q'+S_i+U_1)=[m](Q'+S_j+U_2)$ for some $i\neq j$. Then $[m](S_i-S_j+U_1-U_2)=O_E$, i.e.\ $S_i-S_j+U_1-U_2\in E[m]$. On the other hand, since $\vartheta^N(S_i)=R_i$ and $\vartheta^N(S_j)=R_j$, and $R_i,R_j\in \ker\vartheta^n$, it follows that $S_i,S_j\in \ker\vartheta^{N+n}$. But $\ker\vartheta^N\cap E[m]=\ker\vartheta^{N+n}\cap E[m]$ and hence we must have $S_i=S_j+R_h$ for some $h\in \{1,\ldots,k\}$. It follows immediately that $\{Q'+S_i+U\colon U\in \ker\vartheta^N\}=\{Q'+S_j+U\colon U\in \ker\vartheta^N\}$, which is impossible since points in the LHS are ancestors via $\psi^N$ of $Q+R_i$ and points in the RHS are ancestors of $Q+R_j$. This argument shows that if we choose any point $P'\in T_\infty(\psi,P)$ at level $N$, then all the nodes of the tree $T_\infty(\vartheta^N,mP')$ belong to $m\cdot T_\infty(\psi,P)$.

The assertion on $P$ and $mP'$ is obvious, since $\psi^N(P')=P$ and $T$ is torsion.

(b) Let $Q\in m\cdot T_\infty(\psi,P)$ and write $Q=mR$ for some $R\in T_\infty(\psi,P)$; by hypothesis $\pi(R)\in K^{\ab}$. Let $R=(x_0,y_0)$ and write the multiplication-by-$m$ map on $E$ in the shape given by Lemma~\ref{isogenies_shape} according to $\pi$: the first claim becomes immediately clear. For example, if $\pi=x^2$ write the multiplication-by-$m$ map as $(x\cdot f_1(x^2),y\cdot f_2(x^2))$ for some $f_1,f_2\in K(x)$; then $x_0^2\in K^{\ab}$ by hypothesis and therefore $\pi(Q)=\pi(x_0\cdot f_1(x_0^2),y_0\cdot f_2(x_0^2))=x_0^2f_1(x_0^2)^2\in K^{\ab}$, too.

To prove the second claim, notice that it is enough to show that there exists a positive integer $e$ such that for every $Q\in m\cdot T_\infty(\psi,P)$ we have $[K(Q):K(Q)\cap K^{\ab}]\le e$. In fact, $K(m\cdot T_\infty(\psi,P))$ is the compositum of all $K(Q)$'s. But this is obvious, in fact if $Q=(x_Q,y_Q)$ lives on $E\colon y^2=x^3+Ax+B$, then $y_Q^2=x_Q^3+Ax_Q+B$, and since $\pi(Q)\in K^{\ab}$ it is clear that $[K(Q):K(Q)\cap K^{\ab}]\leq 12$ in any case.
\end{proof}

\begin{corollary}\label{diagram_reduction}
Let $\phi,E,\vartheta,\pi,m$ be as above, and let $\alpha\in K$. If $G_\infty(\phi,\alpha)$ is abelian, then there exists an integer $N\geq 1$ and an element $\alpha'\in K^{\ab}$ such that if $\phi'$ is the $K$-Latt\`es map that fits in the following diagram:
$$
\xymatrix{
    E \ar[r]^{\vartheta^N} \ar[d]_{\pi} & E \ar[d]^{\pi} \\
    \mathbb P^1 \ar[r]^{\phi'}       & \mathbb P^1 }
$$
then $G_\infty(\phi',\alpha')$ is a subgroup of $\gal(K^{\ab}/K(\alpha'))$ (and hence in particular it is abelian), and $\alpha$ is preperiodic if and only if $\alpha'$ is preperiodic.
\end{corollary}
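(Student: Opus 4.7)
The plan is to apply Lemma~\ref{endomorphism_reduction} directly, using the trees $T_\infty(\psi,P)$ and $T_\infty(\vartheta^N,mP')$ to transfer the abelianness hypothesis from $\phi$ to a Lattès map attached to a pure endomorphism. First I would fix any lift $P\in E(\ov\K)$ with $\pi(P)=\alpha$. The hypothesis that $G_\infty(\phi,\alpha)$ is abelian is equivalent to $K_\infty(\phi,\alpha)\subseteq K^{\ab}$, hence $\phi^{-\infty}(\alpha)\subseteq K^{\ab}$, and since $\pi(T_\infty(\psi,P))\subseteq \phi^{-\infty}(\alpha)$ the hypothesis of Lemma~\ref{endomorphism_reduction}(b) is satisfied. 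Applying that lemma gives $\pi(m\cdot T_\infty(\psi,P))\subseteq K^{\ab}$.

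Next I would invoke Lemma~\ref{endomorphism_reduction}(a) to produce an integer $N\ge 1$ and a node $P'\in T_\infty(\psi,P)$ at level $N$ with the property that every node of $T_\infty(\vartheta^N,mP')$ already lies in $m\cdot T_\infty(\psi,P)$. Define $\alpha'\coloneqq \pi(mP')$ and let $\phi'\in K(x)$ be the (unique) rational map that fits in the commutative diagram with $\vartheta^N$ in place of $\psi$; such a $\phi'$ exists because $\pi$ and $\vartheta^N$ are defined over $K$, and it is itself defined over $K$. By construction $\alpha'\in K^{\ab}$.

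The main content is then to show $(\phi')^{-\infty}(\alpha')\subseteq K^{\ab}$. For any $\gamma$ with $(\phi')^n(\gamma)=\alpha'$ and any $Q\in\pi^{-1}(\gamma)$, the relation $\pi\circ\vartheta^{Nn}=(\phi')^n\circ\pi$ forces $\vartheta^{Nn}(Q)\in\pi^{-1}(\alpha')$. Hence
$$(\phi')^{-\infty}(\alpha')\subseteq \pi\!\left(\bigcup_{R\in\pi^{-1}(\alpha')}T_\infty(\vartheta^N,R)\right).$$
Using the explicit forms of $\pi$ recalled before Lemma~\ref{isogenies_shape}, each $R\in\pi^{-1}(\alpha')$ is of the form $\sigma(mP')$ for some $\sigma\in\aut(E)$, and since $\vartheta^N$ commutes with such automorphisms (this is the content of Lemma~\ref{isogenies_shape}) one gets $T_\infty(\vartheta^N,\sigma(mP'))=\sigma(T_\infty(\vartheta^N,mP'))$, hence $\pi(T_\infty(\vartheta^N,R))=\pi(T_\infty(\vartheta^N,mP'))$. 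Combining with Lemma~\ref{endomorphism_reduction}(a)(b) gives $(\phi')^{-\infty}(\alpha')\subseteq \pi(m\cdot T_\infty(\psi,P))\subseteq K^{\ab}$. Because $\phi'$ is defined over $K\subseteq K(\alpha')$, the extension $K_\infty(\phi',\alpha')/K(\alpha')$ is Galois; together with the inclusion $K_\infty(\phi',\alpha')\subseteq K^{\ab}$ this yields the desired embedding $G_\infty(\phi',\alpha')\hookrightarrow\gal(K^{\ab}/K(\alpha'))$.

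Finally, the preperiodicity equivalence is immediate from the classification of preperiodic points of a Lattès map as the $\pi$-images of torsion points: $\alpha$ is preperiodic for $\phi$ iff $P\in E(\ov\K)_{\mathrm{tor}}$; by the second assertion of Lemma~\ref{endomorphism_reduction}(a), this is equivalent to $mP'\in E(\ov\K)_{\mathrm{tor}}$, which in turn is equivalent to $\alpha'=\pi(mP')$ being preperiodic for $\phi'$. The main obstacle I expect is the step showing $(\phi')^{-\infty}(\alpha')\subseteq \pi(T_\infty(\vartheta^N,mP'))$: one has to run over all sheets of $\pi$ over $\alpha'$ and verify that the corresponding preimage trees give the same $\pi$-image, which relies on the automorphism-equivariance of $\vartheta^N$ implicit in Lemma~\ref{isogenies_shape}.
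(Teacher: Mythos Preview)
Your argument is correct and follows the same strategy as the paper's short proof: invoke Lemma~\ref{endomorphism_reduction}(a) to obtain $N$ and $P'$, take $\alpha'=\pi(mP')$, and use part~(b) to place $(\phi')^{-\infty}(\alpha')$ inside $K^{\ab}$. You are in fact more careful than the paper on two points it glosses over: you set $\alpha'=\pi(mP')$ rather than $\pi(P')$ (the paper writes the latter, but its appeal to~(a) and~(b) really only works for the base point $mP'$), and you explicitly verify that all sheets of $\pi^{-1}(\alpha')=\aut(E)\cdot(mP')$ give the same $\pi$-image of the $\vartheta^N$-tree, using the commutativity of $\en(E)$.
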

\begin{proof}
Let $N$ be as in point $(a)$ of Lemma~\ref{endomorphism_reduction}, let $P'\in T_\infty(\psi,P)$ have distance $N$ from the root $P$ and let $\alpha'\coloneqq \pi(P')$. This belongs to $K^{\ab}$ by point $(b)$ and is preperiodic by $(a)$. By the same point $(a)$, $(\vartheta^N)^{-\infty}(\alpha')$ is entirely contained in $m\cdot \pi(T_\infty(\psi,P))$, and hence it entirely consists of abelian points by $(b)$.
\end{proof}

\begin{proof}[Proof of Theorem \ref{lattes_abelian}]
First, assume that $(1)$,$(2)$ and $(3)$ hold. Then $\alpha=\pi(P)$ for some $P\in E_{tor}$ and $F=\text{End}(E)\otimes \Q$ is a quadratic number field. Let $E\colon y^2=x^3+Ax+B$ for some $A,B\in K$. First assume that $AB\ne 0$. Then, as explained for example in \cite[Example 5.5.1]{silverman1}, the function $\pi=x \colon E\to \PP^1$ is a Weber function for the curve $E$. It follows by the theory of complex multiplication (see for example \cite[Corollary 5.7]{silverman1}) that $F(j(E),\pi(E_{tor}))$ is the maximal abelian extension of $F$. However $F\subseteq K$ by point $(3)$, and therefore by a standard fact in Galois theory $K(j(E),\pi(E_{tor}))=K(\pi(E_{tor}))$ is an abelian extension of $K$. But clearly $K(\phi^{-\infty}(\alpha))\subseteq K(\pi(E_{tor}))$, and the claim follows.

Now assume that $AB=0$. In these cases the field $F$ has class number $1$, since it is either $\Q(i)$ or $\Q(\zeta_3)$. Then, as proved in \cite[Example 5.8]{silverman1}, the field $F(E_{tor})$ is the maximal abelian extension of $F$, and therefore $K(E_{tor})/K$ is abelian by (3). Again, $K(\phi^{-\infty}(\alpha))\subseteq K(\pi(E_{tor}))$ and the claim follows.

Conversely, assume that $G_\infty(\phi,\alpha)$ is abelian. Let $\phi$ be defined via diagram~\eqref{lattes_diagram}. Notice that since points $(1)$ and $(2)$ in the statement are independent of the base field, in order to prove them we can assume (possibly enlarging $K$) that $\psi$ is an endomorphism of $E$, thanks to Corollary~\ref{diagram_reduction}. Let $P\in E$ be such that $\alpha=\pi(P)$. Now consider the tree $T_\infty(\psi,P)$; the map $\pi$ induces a surjection $T_\infty(\psi,P)\twoheadrightarrow \phi^{-\infty}(\alpha)$. In order to prove $(1)$ and $(2)$ we will prove the following claim:

\begin{center}
(*) there exists a finite extension $L/K$ and a point $P'\in T_\infty(\psi,P)$ defined over $L$ such that every point in $T_\infty(\psi,P')$ belongs to $E(L^{\ab})$.
\end{center}

Let us show first that claim (*) implies $(1)$ and $(2)$. Suppose it holds, and pick a sequence $\{P_n\}_{n\geq 0}$ of points in $T_\infty(\psi,P')$ such that $P_0=P'$ and $\psi^n(P_n)=P_0$ for every $n\geq 1$. To prove $(1)$, let $\widehat{h}_{E}$ be the N\'eron-Tate canonical height on $E$: we have that $\widehat{h}_E(P_n)\to 0$ as $n\to\infty$ because $\psi(P_n)=P_{n-1}$ and $\psi$ has degree $d\ge 2$. It follows by~\cite[Theorem~1]{silverman3} that $\widehat{h}_E(P_n)=0$ for every $n$, and therefore $P_0=P'$ is torsion, showing that $P$ is torsion, too (as $P=\psi^k(P')$ for some $k\geq 0$) and in turn that $\alpha$ is preperiodic for $\phi$. In order to prove $(2)$, suppose by contradiction that $E$ has no CM and let $p$ be a prime that divides $d$. Since $E$ has no CM the map $\psi$ is multiplication by an integer and hence $L(T_\infty(\psi,P'))$ contains $L(E[p^\infty])$. This implies that the $p$-adic Galois representation $\rho_{E,p}\colon G_L\to \GL_2(\Z_p)$ has abelian image, contradicting Serre's open image theorem according to which $\rho_{E,p}(G_L)$ is open in $\GL_2(\Z_p)$ (it is straigthforward to verify that $\GL_2(\Z_p)$ has no open abelian subgroups).

Let us now prove (*). This is done case by case accordingly to what map $\pi$ is, but the strategy is always the same. Start by assuming that $\pi$ is the $x$-coordinate map on $E\colon y^2=x^3+Ax+B$ where $A,B\in K$. Clearly there exists a point $P'\in T_\infty(\psi,P)$ such that $P'\notin E[2]$. Notice that therefore no point in $T_\infty(\psi,P')$ is $2$-torsion, since $\psi$ is an endomorphism. Let $L$ be the field of definition of $P'$. Now it is easy to show by induction that every point in $T_\infty(\psi,P')$ belongs to $L^{\ab}$: suppose it is true for every point at level $n$, and let $(x_{n+1},y_{n+1})$ have level $n+1$. Write $\psi$ in the form $(f_1(x),y f_2(x))$ for some $f_1,f_2\in K(x)$, via Lemma~\ref{isogenies_shape}. Then $\psi((x_{n+1},y_{n+1}))=(f_1(x_{n+1}),y_{n+1} f_2(x_{n+1}))$. Now since by hypothesis $G_\infty(\phi,\alpha)$ is abelian then $x_{n+1}=\pi((x_{n+1},y_{n+1}))\in L^{\ab}$ and hence $f_1(x_{n+1}),f_2(x_{n+1})\in L^{\ab}$. Since $\psi((x_{n+1},y_{n+1}))$ has distance $n$ from $P'$, by the inductive hypothesis $y_{n+1} f_2(x_{n+1})\in L^{\ab}$, and we are done because $f_2(x_{n+1})\neq 0$ since no point in the tree $T_\infty(\psi,P')$ is $2$-torsion.

Next, assume that $\pi=x^2$ and $E\colon y^2=x^3+Ax$ for some $A\in K$. Let $P'\in T_\infty(\psi,P)$ be such that no point in $T_\infty(\psi,P')$ is $2$-torsion, and let $L$ be the field of definition of $P'$. Again, assume by induction that every point at level $n$ in $T_\infty(\psi,P')$ is in $L^{\ab}$ and pick $(x_{n+1},y_{n+1})$ at level $n+1$. Write $\psi=(x f_1(x^2),y f_2(x^2))$ via Lemma~\ref{isogenies_shape}: then $\psi((x_{n+1},y_{n+1}))=(x_{n+1} f_1(x_{n+1}^2),y_{n+1} f_2(x_{n+1}^2))$ is at level $n$ and therefore $x_{n+1} f_1(x_{n+1}^2),y_{n+1} f_2(x_{n+1}^2)\in L^{\ab}$. By hypothesis $x_{n+1}^2=\pi((x_{n+1},y_{n+1}))\in L^{\ab}$ and on the other hand $f_1(x_{n+1}^2),f_2(x_{n+1}^2)\neq 0$ because $T_\infty(\psi,P')$ contains no $2$-torsion. It follows by the inductive hypothesis that $x_{n+1},y_{n+1}\in L^{\ab}$.

When $E\colon y^2=x^3+A$ for some $A\in K$ and $\pi=x^3$ or $y$, we pick $P'\in T_\infty(\psi,P)$ such that $T_\infty(\psi,P')$ contains no $2$-torsion nor $3$-torsion. Now we proceed in the same way we did for the other cases via Lemma~\ref{isogenies_shape}. The absence of $2$ or $3$-torsion makes the inductive step possible.

To conclude the proof we need to prove $(3)$. We know that $P$ is a torsion point, $E$ has CM by a quadratic field $F$ and $\psi$ is the composition of an endomorphism $\vartheta$ and the translation by a torsion point $T$. If $\vartheta$ is a CM endomorphism then it is defined over $KF$, and since it is defined over $K$ by hypothesis, it follows that $F\subseteq K$. Hence we can assume that $\vartheta$ is multiplication by the integer $e=\sqrt{d}$. Now let $m$ be an integer such that $mT=O$, choose an integer $N$ as in point $(a)$ of Lemma~\ref{endomorphism_reduction} and let $P'$ be at distance $N$ from $P$: we have that $T_\infty([e^N],mP')$ is entirely contained in $m\cdot T_\infty(\psi,P)$. This implies that if $p$ is a prime that divides $e$ then $K(E[p^{\infty}])\subseteq K(T_\infty([e^N],mP'))\subseteq K(m\cdot T_\infty(\psi,P))$. By point $(b)$ of Lemma~\ref{endomorphism_reduction}, the group $\gal(K(m\cdot T_\infty(\psi,P))/K)$ has a normal subgroup of finite exponent with abelian quotient. Hence the same must be true for $\gal(K(E[p^{\infty}]/K)$, that is a quotient of the former. However, this implies that all commutators in $\gal(K(E[p^{\infty}]/K)$ have finite order, contradicting Lemma~\ref{commutators} unless $F\subseteq K$.
\end{proof}

\begin{example}\label{lattes_example}
The simplest examples of maps satisfying the conditions of Theorem~\ref{lattes_abelian} are quadratic $K$-Latt\`es maps coming from endomorphisms (i.e.\ with $T=O_E$) with $K$ an imaginary quadratic number field. In these cases, the elliptic curve from which the map descends must be defined over $\Q$. Moreover, such maps come from elements of norm 2 in an order of $K$. There are only two instances of such behaviour: $\Q(i)$ and $\Q(\sqrt{-7})$, with the order being the full ring of integers. When $K=\Q(i)$, there are three families of quadratic $K$-Latt\`es maps coming from endomorphisms: $\frac{i}{2}\cdot\frac{x^2+A}{x}$, $-\frac{i}{2}\cdot\frac{x^2+A}{x}$ and $-\frac{1}{4}\cdot\frac{(x+A)^2}{x}$, where $A\in K^{\times}$. All maps in one single family are $K^{\ab}$-conjugate to each other. This follows from the theoretical reason that curves of the form $y^2=x^3+Ux$, for $U\in K^\times$, are all quartic twists of each other, but it can also easily be verified by hand. For example, if $A,A'\in K$ then the maps $\frac{i}{2}\cdot\frac{x^2+A}{x}$ and $\frac{i}{2}\cdot\frac{x^2+A'}{x}$ are conjugate via the linear transformation $x\mapsto \sqrt{A/A'}x$, that is defined over $K^{\ab}$. However it is false, in general, that two maps in the same family are $K$-conjugate to each other. Moreover, it can be easily checked by looking at multipliers that maps in one family are not $\overline{K}$-conjugate to maps in a different one. We remark that certain arithmetic dynamical properties of the map $\frac{i}{2}\cdot\frac{x^2-1}{x}$ have been studied in~\cite{mizusawa}.
\end{example}

\bibliographystyle{plain}
\bibliography{bibliography}

\begin{thebibliography}{10}

\bibitem{aliev}
Iskander Aliev and Chris Smyth.
\newblock Solving algebraic equations in roots of unity.
\newblock {\em Forum Math.}, 24(3):641--665, 2012.

\bibitem{anderson}
Jacqueline Anderson, Spencer Hamblen, Bjorn Poonen, and Laura Walton.
\newblock Local arboreal representations.
\newblock {\em Int. Math. Res. Not. IMRN}, (19):5974--5994, 2018.

\bibitem{andrews}
Jesse Andrews and Clayton Petsche.
\newblock Abelian extensions in dynamical {G}alois theory.
\newblock {\em Algebra Number Theory}, 14(7):1981--1999, 2020.

\bibitem{benedetto2}
Robert Benedetto, Patrick Ingram, Rafe Jones, and Alon Levy.
\newblock Attracting cycles in {$p$}-adic dynamics and height bounds for
  postcritically finite maps.
\newblock {\em Duke Math. J.}, 163(13):2325--2356, 2014.

\bibitem{benedetto4}
Robert Benedetto, Patrick Ingram, Rafe Jones, Michelle Manes, Joseph~H.
  Silverman, and Thomas~J. Tucker.
\newblock Current trends and open problems in arithmetic dynamics.
\newblock {\em Bull. Amer. Math. Soc. (N.S.)}, 56(4):611--685, 2019.

\bibitem{benedetto1}
Robert~L. Benedetto.
\newblock {\em Dynamics in one non-archimedean variable}, volume 198 of {\em
  Graduate Studies in Mathematics}.
\newblock American Mathematical Society, Providence, RI, 2019.

\bibitem{benedetto3}
Robert~L. Benedetto and Jamie Juul.
\newblock Odoni's conjecture for number fields.
\newblock {\em Bull. Lond. Math. Soc.}, 51(2):237--250, 2019.

\bibitem{BG}
Enrico Bombieri and Walter Gubler.
\newblock {\em Heights in Diophantine Geometry}.
\newblock New Mathematical Monographs. Cambridge University Press, 2006.

\bibitem{magma}
Wieb Bosma, John Cannon, and Catherine Playoust.
\newblock The {M}agma algebra system. {I}. {T}he user language.
\newblock {\em J. Symbolic Comput.}, 24(3-4):235--265, 1997.
\newblock Computational algebra and number theory (London, 1993).

\bibitem{boston2}
Nigel Boston and Rafe Jones.
\newblock Arboreal {G}alois representations.
\newblock {\em Geom. Dedicata}, 124:27--35, 2007.

\bibitem{bridy2}
Andrew Bridy, John~R. Doyle, Dragos Ghioca, Liang-Chung Hsia, and Thomas~J.
  Tucker.
\newblock Finite index theorems for iterated {G}alois groups of unicritical
  polynomials.
\newblock {\em Trans. Amer. Math. Soc.}, 374(1):733--752, 2021.

\bibitem{bridy}
Andrew Bridy, Patrick Ingram, Rafe Jones, Jamie Juul, Alon Levy, Michelle
  Manes, Simon Rubinstein-Salzedo, and Joseph~H. Silverman.
\newblock Finite ramification for preimage fields of post-critically finite
  morphisms.
\newblock {\em Math. Res. Lett.}, 24(6):1633--1647, 2017.

\bibitem{Chen}
Evan Chen.
\newblock Avoiding algebraic integers of bounded house in orbits of rational
  functions over cyclotomic closures.
\newblock {\em Proc. Amer. Math. Soc.}, 146(10):4189--4198, 2018.

\bibitem{DZ}
R.~Dvornicich and U.~Zannier.
\newblock Cyclotomic {D}iophantine problems ({H}ilbert irreducibility and
  invariant sets for polynomial maps).
\newblock {\em Duke Math. J.}, 139(3):527--554, 2007.

\bibitem{FeiSch}
Burton Fein and Murray Schacher.
\newblock {Properties of Iterates and Composites of Polynomials}.
\newblock {\em Journal of the London Mathematical Society}, 54(3):489--497, 12
  1996.

\bibitem{ferra2}
Andrea Ferraguti.
\newblock The set of stable primes for polynomial sequences with large {G}alois
  group.
\newblock {\em Proc. Amer. Math. Soc.}, 146(7):2773--2784, 2018.

\bibitem{ferra5}
Andrea Ferraguti.
\newblock A survey on abelian dynamical {G}alois groups.
\newblock {\em Rend. Semin. Mat. Univ. Politec. Torino}, 80(1):41--54, 2022.

\bibitem{ferra4}
Andrea Ferraguti and Giacomo Micheli.
\newblock An equivariant isomorphism theorem for mod {$\mathfrak{p}$}
  reductions of arboreal {G}alois representations.
\newblock {\em Trans. Amer. Math. Soc.}, 373(12):8525--8542, 2020.

\bibitem{ferra1}
Andrea Ferraguti and Carlo Pagano.
\newblock Constraining images of quadratic arboreal representations.
\newblock {\em Int. Math. Res. Not. IMRN}, (22):8486--8510, 2020.

\bibitem{FZ}
Clemens Fuchs and Umberto Zannier.
\newblock Composite rational functions expressible with few terms.
\newblock {\em J. Eur. Math. Soc. (JEMS)}, 14(1):175--208, 2012.

\bibitem{hindes}
Wade Hindes.
\newblock Average {Z}sigmondy sets, dynamical {G}alois groups, and the
  {K}odaira-{S}pencer map.
\newblock {\em Trans. Amer. Math. Soc.}, 370(9):6391--6410, 2018.

\bibitem{jones_divisors}
Rafe Jones.
\newblock The density of prime divisors in the arithmetic dynamics of quadratic
  polynomials.
\newblock {\em J. Lond. Math. Soc. (2)}, 78(2):523--544, 2008.

\bibitem{jones_ev}
Rafe Jones and Alon Levy.
\newblock Eventually stable rational functions.
\newblock {\em Int. J. Number Theory}, 13(9):2299--2318, 2017.

\bibitem{juul}
Jamie Juul, Holly Krieger, Nicole Looper, Michelle Manes, Bianca Thompson, and
  Laura Walton.
\newblock Arboreal representations for rational maps with few critical points.
\newblock In {\em Research directions in number theory---{W}omen in {N}umbers
  {IV}}, volume~19 of {\em Assoc. Women Math. Ser.}, pages 133--151. Springer,
  Cham, [2019] \copyright 2019.

\bibitem{neftin}
Joachim K\"{o}nig and Danny Neftin.
\newblock The admissibility of {$M_{11}$} over number fields.
\newblock {\em J. Pure Appl. Algebra}, 222(9):2456--2464, 2018.

\bibitem{loxton}
J.~H. Loxton.
\newblock On the maximum modulus of cyclotomic integers.
\newblock {\em Acta Arith.}, 22:69--85, 1972.

\bibitem{lozano}
\'{A}lvaro Lozano-Robledo.
\newblock Galois representations attached to elliptic curves with complex
  multiplication.
\newblock {\em Algebra Number Theory}, 16(4):777--837, 2022.

\bibitem{lukas}
David Lukas, Michelle Manes, and Diane Yap.
\newblock A census of quadratic post-critically finite rational functions
  defined over {$\mathbb{Q}$}.
\newblock {\em LMS J. Comput. Math.}, 17(suppl. A):314--329, 2014.

\bibitem{lukina}
Olga Lukina.
\newblock Galois groups and {C}antor actions.
\newblock {\em Trans. Amer. Math. Soc.}, 374(3):1579--1621, 2021.

\bibitem{mizusawa}
Yasushi Mizusawa and Kota Yamamoto.
\newblock Iterated towers of number fields by a quadratic map defined over the
  {G}aussian rationals.
\newblock {\em Proc. Japan Acad. Ser. A Math. Sci.}, 96(8):63--68, 2020.

\bibitem{odoni}
R.~W.~K. Odoni.
\newblock The {G}alois theory of iterates and composites of polynomials.
\newblock {\em Proc. London Math. Soc. (3)}, 51(3):385--414, 1985.

\bibitem{ostafe}
Alina Ostafe.
\newblock On roots of unity in orbits of rational functions.
\newblock {\em Proc. Amer. Math. Soc.}, 145(5):1927--1936, 2017.

\bibitem{OSSZ}
Alina Ostafe, Min Sha, Igor~E. Shparlinski, and Umberto Zannier.
\newblock {On Multiplicative Dependence of Values of Rational Functions and a
  Generalization of the Northcott Theorem}.
\newblock {\em Michigan Mathematical Journal}, 68(2):385 -- 407, 2019.

\bibitem{rosen}
Michael Rosen.
\newblock {\em Number theory in function fields}, volume 210 of {\em Graduate
  Texts in Mathematics}.
\newblock Springer-Verlag, New York, 2002.

\bibitem{silverman1}
Joseph~H. Silverman.
\newblock {\em Advanced topics in the arithmetic of elliptic curves}, volume
  151 of {\em Graduate Texts in Mathematics}.
\newblock Springer-Verlag, New York, 1994.

\bibitem{silverman4}
Joseph~H. Silverman.
\newblock Rational functions with a polynomial iterate.
\newblock {\em J. Algebra}, 180(1):102--110, 1996.

\bibitem{silverman3}
Joseph~H. Silverman.
\newblock A lower bound for the canonical height on elliptic curves over
  abelian extensions.
\newblock {\em J. Number Theory}, 104(2):353--372, 2004.

\bibitem{silverman2}
Joseph~H. Silverman.
\newblock {\em The arithmetic of dynamical systems}, volume 241 of {\em
  Graduate Texts in Mathematics}.
\newblock Springer, New York, 2007.

\end{thebibliography}

\end{document}